\documentclass{article}
\usepackage{latexsym}
\usepackage{float}
\usepackage{pstricks}
\usepackage{amssymb}
\usepackage{amsmath} 
\usepackage{amsfonts}
\usepackage{mathrsfs}
\usepackage{mathtools}

\def\noi{\noindent}
\def\Id{\text{Id}}
\def\til{\widetilde}
\mathtoolsset{showonlyrefs}
\usepackage{amsthm,array}
\usepackage{array}
\usepackage{cite}
\usepackage{graphicx}
\usepackage{multirow}

\newtheorem{theorem}{Theorem}[section]

\newtheorem{lemma}[theorem]{Lemma}
\newtheorem{corollary}[theorem]{Corollary}

\newtheorem{definition}[theorem]{Definition}
\newtheorem{remark}[theorem]{Remark}

\newtheorem{remark-definition}[theorem]{Remark and Definition}

\setcounter{section}{0}
\setcounter{subsection}{1}
\setcounter{theorem}{2}

\begin{document}
	
	\title{Guaranteed estimates for the length of branches of periodic orbits for equivariant Hopf bifurcation}
	
	\author{\setcounter{footnote}{3}
		E. Hooton\footnote{Institute of Mathematics, Czech Academy of Sciences, \v{Z}itn\'a 25, 11567 Prague, Czech Republic.}, \setcounter{footnote}{0}
		Z. Balanov\footnote{Department of Mathematics, Xiangnan University, Chenzhou, Hunan 423000, China} \footnote{Department of Mathematical Sciences, The University of Texas at Dallas, USA.}
		\footnote{Corresponding author; email: balanov@utdallas.edu},
	D. Rachinskii$^\dagger$
}
	\maketitle
	
	\begin{abstract}
		Connected branches of periodic orbits originating at a Hopf bifurcation point of a differential system are considered. A computable estimate for the range of amplitudes of periodic orbits contained in the branch is provided under the assumption that the nonlinear terms satisfy a linear estimate in a ball. If the estimate is global, then the branch is unbounded. The results are formulated in an equivariant setting where the system can have multiple branches of periodic orbits characterized by different groups of symmetries. The non-local analysis is based on  the equivariant degree method, which allows us to handle both generic and degenerate Hopf bifurcations. This is illustrated by examples.
	\end{abstract}

\medskip
{\bf MSC:} 37G15; 37G15

\medskip
{\bf Keywords:} Global Hopf bifurcation, equivariant system, $S^1$-degree.

\section{Introduction}

The theorem of Rabinowitz \cite{rabinowitz}, extending the work of Krasnosel'skii \cite{krasnoselskii}, provides a classical topological tool for analysis of global bifurcations.
It establishes that a continuous branch of fixed points bifurcating from a trivial solution either extends to infinity or connects back to the trivial solution at another bifurcation point. 
The global bifurcation theorem of Alexander and Yorke \cite{alexanderyorke} establishes a counterpart of this result for branches of periodic orbits bifurcating from an equilibrium via a Hopf bifurcation (see also \cite{malletparetchow, FiedlerBook2,IV-B,AED,Wu1}). The theorem states that these branches 
are either unbounded or connect to an equilibrium at another Hopf bifurcation point. 
The latter of the two alternatives can sometimes be excluded by local analysis at the equilibrium point, in which case the theorem guarantees the existence of an unbounded branch of periodic orbits.

It is important to note that in the above theorems branches of periodic orbits are considered in Fuller space, i.e.~in the product of the 
space of periodic functions (solutions) and the  space of parameters which include the bifurcation parameter $\alpha$ and the unknown period $p$ of periodic orbits \cite{qingwen}.
Thus, a branch is unbounded if it contains elements for $\alpha$ arbitrarily close to the boundary of the parameter interval, or contains periodic orbits of arbitrarily large amplitude (norm $\|x\|$), or orbits of arbitrarily large period $p$, or several of these possibilities are combined. 
In particular, it is a non-trivial problem to determine whether the branch is unbounded because it contains large-amplitude periodic orbits or their periods are unbounded.  An example of the latter possibility is a branch connecting a Hopf bifurcation point with a homoclinic bifurcation point. In this context, {\em a priori} estimates of the period play an important role. However, they are typically hard to establish, and 
the question is further complicated by the fact that periods considered in the theorem are not necessarily the minimal period.
If the norm of periodic orbits along the branch is uniformly bounded, the above topological results do not provide any estimate for this norm. 

The purpose of this paper is to prove the existence of continuous branches that contain periodic orbits of all amplitudes ranging from zero to $\|x\|=R$ where we can explicitly control $R$. In particular, under certain conditions, this branch is unbounded and $\|x\|$ ranges from zero to infinity, while the minimal period of the orbits is unifomly bounded. We prove the existence of a non-stationary periodic solution of any given norm  $\|x\|=s$ satisfying $0< s<R$ by showing that the equivariant $S^1$-degree of a vector field $\mathcal F$ associated with the problem is non-trivial on the boundary of some domain $\Omega$ containing the periodic orbit. The proof is completed by  Kuratowski's lemma which ensures in a standard way that  all these solutions are embedded in a connected branch of non-stationary periodic orbits stemming from a Hopf bifurcation point. 
Because the periodic orbits of interest are neither small nor close to the bifurcation point, the analysis is non-local, and the domain $\Omega$ 
is designed in a special way. In order to compute the $S^1$-degree, we demand 
the vector field $\mathcal F$ to have a principal linear (with respect to $x$) component $a(\alpha,p)x$ on a part of the boundary of $\Omega$, in the sense that a certain projection $\mathcal Q \mathcal F$ of the vector field satisfies $\|\mathcal Q\mathcal F(\alpha,p,x) - a(\alpha,p)x\|\le \|a(\alpha,p)x\|$. This condition limits the size of the domain $\Omega$ and the maximal amplitude $\|x\|=R$ of the orbits that we can capture, unless the above estimate is global.

In addition, we prove the theorem in the equivariant setting where the system respects a group of spatial symmetries $\Gamma$. In other words, we consider a $\Gamma$-equivariant Hopf bifurcation. As a typical scenario, this bifurcation can give rise to multiple branches of periodic orbits characterized by different groups of spatio-temporal symmetries. In order to ensure the existence of a branch with a specific symmetry, and estimate the range of norms of its elements, we restrict the opeartor of the problem to the fixed point space of the corresponding symmetry group and apply the non-local topological construction described above to the restricted operator. As we illustrate by examples, this approach allows us to handle not only generic $\Gamma$-equivariant systems but also a number of degenerate cases. One of them is the simultaneous Hopf and steady state bifurcations. In another degenerate resonance situation the crossing number is undefined because the linearization has a pair of purely imaginary roots for all the parameter values, and other roots cross the imaginary axis through this pair at the Hopf bifurcation point. 


The paper is organized as follows. Section \ref{prelim} contains a brief account of 
{the $S^1$-degree \cite{Dylawer}, which is the main equivariant topological tool used in the proofs 
	(see also \cite{AED} for the axiomatic approach to the $S^1$-degree and \cite{AED,IV-B} for a systematic exposition of the equivariant degree theory and its applications to symmetric Hopf bifurcation).}
The main result and its proof are presented in Sections \ref{main} and \ref{proof}. Section \ref{examples} contains three examples. Some notation of the latter section is explained in Appendix.


\section{Preliminaries}\label{prelim}

In this section, we provide some equivariant degree background. 


\subsection{$S^1$-degree}\label{subsec:S1-degree}
Let $G$ be a compact Lie group acting on a metric space $X$ (see, for example, \cite{AED,Brocker-tomDieck}). For any $x \in X$, put $G(x)=\{gx \in X \; : \; g \in G\}$ and call it the orbit of $x$.
A set $Z \subset X$ is called $G$-invariant (in short, invariant) if it contains all its orbits. Given a (closed) subgroup $H \leq G$, denote by $X^H:= \{x \in X \, : \, hx = x \; \forall h \in H\}$ the set of all $H$-fixed points of $X$.
Assume $G$ acts on two metric spaces $X$ and $Y$. A continuous map $f : X \to Y$ is called $G$-equivariant
if $f(gx) = gf(x)$ for all $x \in X$ and $g \in G$. In particular, if the action of $G$ on $Y$ is trivial, then the equivariant map is called $G$-invariant. We refer to \cite{Brocker-tomDieck,GolSteShef,GS,AED} for the equivariant topology and representation theory background frequently used in the present paper.

Let $V$ be an orthogonal $S^1$-representation. Suppose that an open bounded set $\Omega \subset \mathbb R \oplus V$ is invariant with respect to the $S^1$-action, where we assume that $S^1$ acts trivially on $\mathbb R$. As it is well-known, for any $x \in \Omega$, one has $G_x = S^1$ or $G_x = \mathbb Z_k$, where $\mathbb Z_k$ stands for a cyclic group of order $k$.

We say that an equivariant map $f: \overline{\Omega} \to V$ is admissible if $f^{-1}\{0\} \cap \partial \Omega = \emptyset$. In this case,  $(f,\Omega)$ is called an admissible pair. Similarly, a continuous map $h : [0,1] \times \overline{\Omega} \to V$ is called an admissible (equivariant) homotopy if $h(t,\cdot)$ is admissible for any $t \in [0,1]$. It is possible to axiomatically define a unique function $S^1$-deg which assigns to each admissible pair a finite linear combination   $\sum_{k=1}^m n_{l_k}(\mathbb Z_{l_k})$, where $n_{l_k} \in \mathbb Z$
(cf.~\cite{AED}, pp. 109, 113). The following is a partial list of the axioms:

\medskip

\noi\textbf{(A1) (Existence)} If $S^1\text{\rm -deg}\,(f,\Omega) = \sum_{k=1}^m n_{l_k}(\mathbb Z_{l_k})$ and $n_{l_k} \neq 0$ for some $k$, then there exists an $x\in \Omega$ such that $f(x) = 0$ and
$\mathbb Z_{l_k} \subset G_x$.

\medskip

\noi\textbf{(A2) (Homotopy)} If $h:[0,1]\times \overline{\Omega} \to V$ is an admissible equivariant homotopy, then the value of
$S^1\text{\rm -deg}\,(h(\mu,\cdot),\Omega)$ is the same for each $\mu$.

\medskip

\noi\textbf{(A3)(Additivity)} For two invariant open disjoint subsets $\Omega_1,\Omega_2 \subset \Omega$ with $f^{-1}(0)\cap \Omega\subset  \Omega_1\cup\Omega_2 $, one has 
$$S^1\text{\rm -deg}\,(f,\Omega)= S^1\text{\rm -deg}\,(f,\Omega_1 )+ S^1\text{\rm -deg}\,(f,\Omega_2).$$

\medskip
\noi
\textbf{(A4)(Normalization)} Denote by  $\mathcal V_1$ the complex plane equipped with the $S^1$-action induced by complex multiplication:
$\gamma z := \gamma \cdot z$, $\gamma= e^{i\theta} \in S^1$, $z \in \mathbb C$.
Define a set $\Omega_0$ and a map $b : \mathbb R \oplus \mathcal V_1 \to \mathcal V_1$ by 
$$
\Omega_0 = \Big\{(t,z) \in \mathbb R \oplus \mathcal V_1 \; : \; |t| < 1, \;\; {1 / 2} < \|z\| < 2\Big\}, \quad b(t,z)= (1 - \|z\| + it)\cdot z.
$$
Then, $S^1\text{\rm -deg}\,(b,\Omega_0)=  1 \cdot (\mathbb Z_1)$.

\medskip
\noi
{\textbf{(A5)(Multiplicativity)} Suppose that $\mathcal U$ is a finite-dimensional space with the trivial $S^1$-representation,  $U$ is an open bounded neighborhood of zero in $\mathcal U$ and $g : U \to \mathcal U$ is a continuous map with no zeros on $\partial U$ . Then,  
	$$S^1\text{\rm -deg}\,(f \times g, \Omega \times U) = S^1\text{\rm -deg}\,(f,\Omega) \cdot \deg(g,U),$$
	where ``$\deg$" stands for the Brouwer degree.}

\medskip
\noi
\textbf{(A6)(Suspension)} Suppose that $\mathcal U$ is an orthogonal $S^1$-representation and $U$ is an open bounded invariant neighborhood of zero in $\mathcal U$. Then, $$S^1\text{\rm -deg}\,(f \times {\rm Id}, \Omega \times U) = S^1\text{\rm -deg}\,(f,\Omega).$$

{\begin{remark}\label{rem:excision}{\rm 
			In a standard way, using property {\bf (A3)}, one can deduce the {\it excision} property of the $S^1$-degree. Namely, if $f^{-1}(0) \cap \Omega \subset \Omega_0$, where
			$\Omega_0 $ is an invariant open subset of $\Omega$, then  $S^1\text{\rm -deg}\,(f ,\Omega) = S^1\text{\rm -deg}\,(f ,\Omega_0)$.}
\end{remark}}

{Combining the equivariant version of the standard Leray-Schauder projection with property {\bf (A6)},} one can define the $S^1$-degree for $S^1$-equivariant compact vector fields (see \cite{AED, IV-B} for details). Also, combining the axioms of the $S^1$-degree with some standard homotopy theory techniques, one can reduce the computation of the $S^1$-degree of the maps naturally associated with a system undergoing the Hopf bifurcation to the computation of the Brouwer degree. To be more precise,  let $V$ be an orthogonal
$S^1$-representation with $V^{S^1}=\{v\in V:\ (\gamma, v)= v \ \ \forall \gamma \in S^1\}=\{0\}$.  Take the isotypical decomposition
$$
V = V_{k_1} \oplus V_{k_2} \cdots \oplus V_{k_s},$$
where each $V_{k_j}$ is modeled by the $k_j$-th irreducible representation. Define
\begin{equation}\label{eq:mathcal-O}
\mathcal O=\{(\lambda,v) \in \mathbb C \oplus V:\|v\|<2\ , {1}/{2}<|\lambda|<4\}.
\end{equation}
Now, consider a map $a:S^1 \to GL^{S^1}(V)$ and define $a_j:S^1 \to GL^{S^1}(V_{k_j})$ by 
the formula  $a_j(\lambda)= a(\lambda)_{|V_{k_j}}$ (see, \cite[p.~284]{AED}). Let $f_a:\overline{\mathcal O} \to \mathbb R\oplus V$ be an
$S^1$-equivariant map defined by
\begin{equation}\label{eq:f-a}
f_a(\lambda,v)= \bigl(|\lambda|(\|v\|-1)+\|v\|+1, a\left({\lambda}/{|\lambda|}\right)v \bigr).
\end{equation}
The following formula {(combined with Property {\bf (A5)}}) plays an important role in our proofs:
\begin{equation}\label{eq:main-formula}
S^1\text{\rm -deg}\,(f_a,\mathcal O)= \sum_{j=1}^s \left( \deg(\text{det}_{\mathbb C}\circ a_j,B)\right)(\mathbb Z_{k_j}),
\end{equation}
where $B$ stands for the unit ball in $\mathbb C$ (cf. \cite{AED}, Theorem 4.23).

\subsection{
	Spatio-temporal symmetries of periodic functions}
	\label{subsec:twisted-subgr}
{If $\Gamma$ is a finite group and $W$ is a  $\Gamma$-representation, then for any periodic function $x :\mathbb R \to W$, its 
spatio-temporal symmetries are described by a subgroup $ H <  \Gamma$ and a homomorphism $\varphi : H \to S^1 =  \mathbb R / \mathbb Z$. This information is encoded in the graph of the homomorphism $\varphi$ which we will denote by $H^{\varphi}$. To be more specific, if $x$ is a $p$-periodic function with symmetry group $H^{\varphi}$, then for each $h \in H$, one has $h x (t - \varphi(h)p ) = x(t)$ for any $t$.  Clearly, if $x$ is a non-constant function, then 
$H^\varphi $ is a finite group. Several twisted subgroups important for the present paper are explicitly described in the Appendix.}

%

\section{Main result}\label{subsec:main-results}\label{main}
Let $\Gamma$ be a finite group and $V=\mathbb{R}^N$ an orthogonal  $\Gamma$-representation. 
Suppose $A: [\alpha_-,\alpha_+] \to L^{\Gamma}(V;V)$ is a continuous curve in the space of $\Gamma$-equivariant linear maps from $V$ to $V$ and $f: [\alpha_-,\alpha_+]\times V\to V$ is a continuous $\Gamma$-equivariant map (we assume that $\Gamma$ acts trivially on $[\alpha_-,\alpha_+]$). We are interested in the existence of branches of periodic solutions with a prescribed spatio-temporal symmetry for the  equation
\begin{equation}\label{PNH}
\dot{x}= A(\alpha)x  + f(\alpha, x),\qquad x\in V.
\end{equation}
Further, we are interested in effective estimates of the length of these branches.
To be more precise, following the standard scheme based on the normalization of the period (see, for example, \cite{AED}), instead of looking for $p$-periodic solutions to \eqref{PNH} with unknown 
period $p$, one can introduce $p$ as an additional parameter and reduce the original problem to looking for $2\pi$-periodic solutions.
To this end, put $\beta:= {2\pi / p}$ and apply the change of variables
$$
u(t) = x\Big(\frac{pt}{2\pi} \Big)
$$
to obtain the problem
\begin{equation}\label{eq:per-normalization}
\begin{cases}
\beta \dot{u} = A(\alpha)u + f(\alpha,u),\\
u(0) = u(2\pi).\\
\end{cases}
\end{equation}
Denote 
by $C:= C(S^1;V)$ the space of continuous $V$-valued maps on $S^1$ equipped with the sup-norm.
We naturally identify $2\pi$-periodic functions with the elements of $C$.
\begin{definition}\label{YTM}
{Let $\mathcal{B}$ be  a set of non-constant solutions $(\alpha, \beta, x(t)) \subset [\alpha_-,\alpha_+]\times[\beta_-,\beta_+]\times C$ of
problem \eqref{eq:per-normalization}  such that $[\beta_-, \beta_+] \subset (0, \infty)$. The set $\mathcal{B}$ is called a branch joining the spheres of radius $r$ and $R$ centered at the origin of $C$ if  $\mathcal{B}$ is a compact connected set in the space $[\alpha_-,\alpha_+]\times[\beta_-,\beta_+]\times C$
equipped with the product norm and $\mathcal{B}$
has a non-empty intersection with each of the sets $[\alpha_-,\alpha_+]\times[\beta_-,\beta_+]\times \{\|x\|_C=r\}$ and $[\alpha_-,\alpha_+]\times[\beta_-,\beta_+]\times \{\|x\|_C=R\}$.}
\end{definition}

{Denote $G:=\Gamma \times S^1$. To connect symmetric properties of the branches with equivariant spectral properties of $A(\alpha)$, 
denote by  $\til{V}=\mathbb{C}^N$ the complexification of the representation $V$ and extend
the complex $\Gamma$-representation $\til V$} to a real $G  $-representation  $^l\til V$ by defining the $l$-folded action of $S^1$ by $e^{i{\theta}}\cdot v := e^{il{\theta}}v$. 
The following family of finite-dimensional maps $\Delta_l(\alpha, \tau, \beta) \in L^G(^l\til V; ^l\til V)$ will {play an important role in our considerations
{(here  $L^G(^l\til V; ^l\til V)$ stands for $G$-equivariant linear operators in $^l\til V$)}: 
\begin{equation}\label{eq:without}
\Delta_l(\alpha, \tau, \beta) :=  l(\tau +i\beta)\Id - A(\alpha).
\end{equation}
Further, take a twisted subgroup $H^\varphi < G$ (cf. Subsection \ref{subsec:twisted-subgr}) 
and denote by $^l\til V^{H^\varphi}$ the fixed point space of $H^\varphi$ and by $\Delta^{H^\varphi}_l(\alpha, \tau, \beta)$ the restriction of $\Delta_l(\alpha,\tau, \beta)$ to $^l\til V^{H^\varphi}$.
With this restriction, we associate
the map $\Lambda^{H^\varphi}_l:[\alpha_-,\alpha_+] \times \mathbb R \times \mathbb R \to \mathbb C$ defined by 
\begin{equation}\label{eq:det-Lambda}
\Lambda^{H^\varphi}_l (\alpha, \tau, \beta) := \begin{cases} \text{det}_\mathbb R (A(\alpha)|_{V^H}) \;\;\,\quad\qquad \text{if} \quad l=0, \\
\text{det}_{\mathbb C}(\Delta^{H^\varphi}_l(\alpha,\tau, \beta)) \qquad \text{otherwise},
\end{cases} 
\end{equation}
which characterizes symmetric properties of branches of  periodic solutions. For a fixed $\alpha$,\ the map $\Lambda^{H^\varphi}_l$
can be identified with a polynomial of the complex variable $\tau+i\beta$.

{To estimate the length of a branch,} 
the following quantity (which is the operator norm  of an operator acting from $L_2$ to $C$ associated with the periodic problem) will be used:  
\begin{equation}\label{linear_bound}
M^{H^\varphi}(\alpha, \beta):= \Big(\sum_{l=0}^\infty \left|(\Delta^{H^\varphi}_l(\alpha, 0, \beta))^{-1}\right|^2\Big)^{1/2},
\end{equation}
where $|\cdot|$ is the matrix norm induced by the norm in $V$.

Given a set $\mathcal P \subset [\alpha_-,\alpha_+] \times \mathbb R_+ \times \mathbb R_+$,
define
\begin{equation}\label{eq:P-0-set}
\mathcal{P}_{\pm} := \mathcal{P}\bigcap \left(\{\alpha_\pm\} \times \mathbb R_+ \times \mathbb R_+\right) \quad {\rm and} \quad
\mathcal P_0 := \mathcal{P}\bigcap \left([\alpha_-,\alpha_+] \times \{0\} \times \mathbb R_+\right),
\end{equation}
{where $\mathbb R_+$ denotes the non-negative semi-axis.}
We denote by $\partial \Omega$ the boundary of a domain ${\Omega}$
and by $\overline{\Omega}$ the closure of ${\Omega}$.

\medskip
{We make the following assumptions:}

\smallskip
\noi\textbf{(P0)} $A$ and $f$ in {\eqref{eq:per-normalization}} depend continuously on their arguments.

\smallskip
\noi\textbf{(P1)}  ${\Lambda^{H^\varphi}_0 (\alpha,0,0)}\neq 0 $ for all $\alpha\in [\alpha_-,\alpha_+]$. 

\smallskip
\noi\textbf{(P2)} There exists a bounded 
domain  $\mathcal{P} \subset [\alpha_-, \alpha_+] \times \mathbb R_+\times\mathbb R_+$ such that:

(i) $\mathcal{P}$  is homeomorphic to a closed ball;

\smallskip

(ii) $\Lambda^{H^\varphi}_1 (\alpha, \tau, \beta)\ne 0$ for all $(\alpha, \tau, \beta) \in \partial\mathcal{P}\setminus (\mathcal{P}_+ \bigcup \mathcal{P}_- \bigcup \mathcal P_0)$;

\smallskip

(iii) $\mathcal{P}_+$ and $\mathcal{P}_-$ contain a different number of roots of $\Lambda^{H^\varphi}_1 (\alpha, \tau, \beta)$ (counted according to their multiplicities).


\smallskip
\noi\textbf{(P3)}
There exists an {open}  set $\mathcal D \subset [\alpha_-,\alpha_+] \times {\{0\}  \times \mathbb R_+}$ such that

\smallskip
(i) $\overline{\mathcal D}$ is homeomorphic to a closed disk;

\smallskip

(ii) $(\Lambda^{H^\varphi}_1)^{-1}(0)\bigcap \overline{\mathcal D} = (\Lambda^{H^\varphi}_1)^{-1}(0)\cap \mathcal P_0$;

\smallskip

(iii) $\Lambda_l^{H^\varphi} (\alpha, 0, \beta) \neq 0$ for any $l\in \mathbb N$ and any $(\alpha,{0, \beta})\in \partial\mathcal D$.

\smallskip
\noi\textbf{(P4)} There exist $N(\alpha)$ and $0\le r<R$ such that for each $\alpha \in [\alpha_-,\alpha_+]$, 
\begin{equation}
\label{estimate}
|f(\alpha, x)|  \le N(\alpha) \max \{r,|x|\} \quad \text{for} \quad |x|\le R.
\end{equation}

\smallskip
\noi\textbf{(P5)} 
The following estimate holds: 
$$N(\alpha)< \frac{1}{\sqrt{2\pi}M^{H^\varphi}(\alpha,\beta)}
\quad \text{for} \quad(\alpha,0,\beta)\in \partial \mathcal D.
$$

\smallskip
\begin{remark}\label{rem:hypotheses} {\rm Condition \textbf{(P0)} is a mild regularity requirement. Condition \textbf{(P1)} guarantees the absence of steady state bifurcation. Assumption \textbf{(P2)}(iii) provides the non-triviality of the (isotypical) crossing number, while \textbf{(P3})(iii) is a  weak version of the non-resonance condition. Assumptions \textbf{(P4), (P5)} ensure that the vector field associated with the problem has a principal linear part on the set difference of balls of radii $R$ and $r$.}

{\rm 
It was observed by J. Ize \cite{Ize-Topolo-bifurcation} that the occurrence of the
Hopf bifurcation with prescribed symmetry is related to the non-triviality of the equivariant $J$-homomorphism associated with the equivariant operator equation. This observation gives rise to the following two questions: (a) Under which conditions on the right-hand side of \eqref{PNH} is the $J$-homomorphism correctly defined? (b) 
Under which conditions is this homomorphism non-trivial? From this viewpoint, conditions \textbf{(P0)}, \textbf{(P1)} and \textbf{(P3)} are related to (a), while condition \textbf{(P2)} is related to (b).} 
\end{remark}

{We are now in a position to formulate the main result of the present paper.}
\begin{theorem}
\label{thm:main-theorem}\label{t1}
Suppose conditions \textbf{(P0)--(P5)} are satisfied.
Then, there exists a branch of non-constant periodic solutions to system  \eqref{PNH} joining  the sphere of radius $r$ to the sphere of radius  $R$ (cf.~Definition \ref{YTM}). Solutions of this branch have {spatio-temporal} symmetry at least $H^\varphi$.
\end{theorem}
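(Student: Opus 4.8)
The plan is to convert the normalized problem \eqref{eq:per-normalization} into a zero problem for a $G$-equivariant compact vector field on the fixed-point subspace $C^{H^\varphi}$, and to detect its nontrivial zeros with the $S^1$-degree. First I would pass to $C^{H^\varphi}\subset C$: since $A(\alpha)$ and $f(\alpha,\cdot)$ are $\Gamma$-equivariant and $S^1$ acts by time shift, this subspace is invariant for the problem, so every nonconstant solution produced inside it has spatio-temporal symmetry at least $H^\varphi$, which is what the theorem asserts. On $C^{H^\varphi}$ the linear operator $u\mapsto \beta\dot u-A(\alpha)u$ is diagonal in the Fourier modes and reduces on the $l$-th mode to $\Delta^{H^\varphi}_l(\alpha,0,\beta)$. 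By \textbf{(P1)} and \textbf{(P3)}(iii) this operator is invertible for $(\alpha,0,\beta)\in\partial\mathcal D$, and the quantity \eqref{linear_bound} bounds the norm of its inverse as an operator $L_2\to C$. I would use this compact resolvent together with the equivariant Leray--Schauder projection to write the problem as $\mathcal F(\alpha,\beta,u)=0$ for a compact field $\mathcal F$ of the form $u\mapsto u-K(\alpha,\beta)f(\alpha,u)$, to which the $S^1$-degree applies.

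Next I would build an invariant open bounded domain $\Omega\subset[\alpha_-,\alpha_+]\times[\beta_-,\beta_+]\times C^{H^\varphi}$ lying over $\mathcal D$ and reaching amplitude $R$, and verify that $(\mathcal F,\Omega)$ is an admissible pair. The key estimate is that, on the part of $\partial\Omega$ over $\partial\mathcal D$ and for $r\le\|u\|_C\le R$, condition \textbf{(P4)} gives $\|f(\alpha,u)\|_{L_2}\le\sqrt{2\pi}\,N(\alpha)\|u\|_C$, whence, using the resolvent bound $M^{H^\varphi}(\alpha,\beta)$ and the strict inequality \textbf{(P5)},
\[
\|K(\alpha,\beta)f(\alpha,u)\|_C\le\sqrt{2\pi}\,N(\alpha)\,M^{H^\varphi}(\alpha,\beta)\,\|u\|_C<\|u\|_C .
\]
Thus on this part of the boundary $\mathcal F$ is a nonvanishing small perturbation of its invertible linear part $u\mapsto u-K(\alpha,\beta)A(\alpha)u$ --- the ``principal linear component'' advertised in the introduction --- which simultaneously guarantees admissibility and provides the admissible homotopy needed below. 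Invertibility of $\Delta^{H^\varphi}_l(\alpha,0,\beta)$ for all $l$ on $\partial\mathcal D$ closes the remaining zero-free estimates.

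I would then compute $S^1\text{-deg}(\mathcal F,\Omega)$. Contracting the nonlinear term along the admissible homotopy of the previous step reduces the field on $\partial\Omega$ to a linear model of the form \eqref{eq:f-a}, so axioms \textbf{(A5)}, \textbf{(A6)} together with formula \eqref{eq:main-formula} express the degree as a sum of terms $\deg(\det_{\mathbb C}\circ\,\Delta^{H^\varphi}_l,\,\cdot\,)(\mathbb Z_{k_l})$. Condition \textbf{(P3)}(iii) annihilates every contribution with $l\ne 1$, while for $l=1$ the argument principle identifies the coefficient with the net number of roots of $\Lambda^{H^\varphi}_1$ that $\mathcal P$ carries between $\mathcal P_-$ and $\mathcal P_+$; this is exactly the isotypical crossing number, nonzero by \textbf{(P2)}(ii)--(iii). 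Hence $S^1\text{-deg}(\mathcal F,\Omega)$ has a nonzero coefficient at $(\mathbb Z_{k_1})$, and the Existence axiom \textbf{(A1)} yields a nonconstant periodic solution with symmetry at least $H^\varphi$.

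Finally, to upgrade existence into a connected branch joining the spheres of radii $r$ and $R$, I would invoke the standard Kuratowski--Whyburn separation argument: were the compact set of solutions to contain no connected component meeting both $\{\|u\|_C=r\}$ and $\{\|u\|_C=R\}$, one could split it into disjoint compact pieces and excise a subdomain whose boundary misses all solutions, forcing $S^1\text{-deg}$ to vanish by additivity \textbf{(A3)} and contradicting the value just computed. I expect the genuine difficulty to be concentrated in the second and third steps: designing $\Omega$ so that a single admissible homotopy reduces the infinite-dimensional field to the finite-dimensional model on \emph{all} of the relevant boundary at once --- which is precisely where the sharp constant $\sqrt{2\pi}\,M^{H^\varphi}(\alpha,\beta)$ in \textbf{(P5)} is forced --- and matching the resulting Brouwer degree to the crossing number encoded by $\mathcal P$ through \eqref{eq:det-Lambda}.
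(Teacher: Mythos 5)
Your overall route --- reformulation as a compact fixed-point problem on $C^{H^\varphi}$, a boundary estimate from \textbf{(P4)}--\textbf{(P5)}, linearization by an admissible homotopy, computation of the degree via \eqref{eq:main-formula} together with the root count in $\mathcal P$, and a Kuratowski-type separation argument --- is the same as the paper's. However, there is a genuine gap at the foundational step: the field whose $S^1$-degree you propose to compute is not defined on the domain. You take $\mathcal F(\alpha,\beta,u)=u-K(\alpha,\beta)f(\alpha,u)$ with $K(\alpha,\beta)=(\beta L-A(\alpha))^{-1}$, but this resolvent exists only where all the operators $\Delta^{H^\varphi}_l(\alpha,0,\beta)$ are invertible. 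Conditions \textbf{(P1)} and \textbf{(P3)}(iii) guarantee this on $\partial\mathcal D$ only; at the Hopf points \emph{inside} $\mathcal D$ --- which must exist by \textbf{(P2)}(iii) combined with \textbf{(P3)}(ii)--(iii), and which are the whole point of the theorem --- the operator $\Delta^{H^\varphi}_1$ is singular and $K(\alpha,\beta)$ blows up. Hence your $\mathcal F$ is not a continuous compact field on $\overline{\Omega}$, its $S^1$-degree is undefined, and homotopy invariance cannot even be invoked. This is exactly what the paper's operator reformulation is designed to avoid: it uses the globally invertible operator $L+K$, where $K$ is the averaging projector of Subsection \ref{subsec-operator-reform} (not your resolvent), to produce the compact field \eqref{eq:fixed-point}, \eqref{eq:wth-s} defined on all of $\overline{\Omega}$, and it passes to the resolvent form only over $\partial\mathcal D$ (Remark \ref{rem:invertibility}, Lemma \ref{l1}), where that form is legitimate. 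Your boundary estimate itself is correct, but it lives on $\partial\mathcal D$ and cannot substitute for a globally defined field.

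A second, related defect is that your field omits the scalar component $\|u\|_C-s$ appearing in \eqref{eq:wth-s}, and this is not cosmetic. First, your $\mathcal F$ maps a domain in $\mathbb R^2\oplus C^{H^\varphi}$ (two free parameters $\alpha,\beta$) into $C^{H^\varphi}$, whereas the $S^1$-degree of Subsection \ref{subsec:S1-degree} is a one-parameter invariant ($\Omega\subset\mathbb R\oplus V$, $f:\overline{\Omega}\to V$); with two free parameters the axioms and formula \eqref{eq:main-formula} you invoke do not apply. Appending the equation $\|u\|_C-s=0$ is what restores the correct count. Second, without this component, admissibility fails on the portion $\overline{\mathcal D}\times\{\|u\|_C=R\}$ of $\partial\Omega$: over interior points of $\mathcal D$ your estimate gives no information, and genuine periodic solutions of amplitude exactly $R$ cannot be excluded there --- indeed the theorem itself asserts that the branch reaches that sphere. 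In the paper, zeros of $\mathfrak F_s$ are forced to have $\|u\|_C=s<R$, which is what makes $(\mathfrak F_s,\Omega)$ admissible for each $s\in(r,R)$ separately. Two smaller inaccuracies: \textbf{(P3)}(iii) does not ``annihilate'' the contributions with $l\neq 1$ --- it only makes the winding numbers $\mathfrak n_l$ in \eqref{def_n_k} well defined, and they need not vanish; non-triviality of the degree comes solely from $\mathfrak n_1\neq 0$ (this does not damage your argument). Also, you never show that the solutions detected by \textbf{(A1)} are non-constant; the paper does this by observing that a constant solution solves the problem for every $\beta$, in particular for some $(\alpha,\beta^*)\in\partial\mathcal D$, which contradicts Lemma \ref{l1}.
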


\begin{remark}{\rm
		It will be shown in the proof that the minimal period of the periodic solutions of the branch is uniformly bounded.
If $r=0$ in {\bf (P4)}, then the branch  connects to a Hopf bifurcation point. If $R=\infty$ (the estimate in {\bf (P4)} is global), then the branch extends to infinity. A non-equivariant variant of the theorem was proved in \cite{KR} for a class of scalar equations in which the nonlinearity satisfies a global sector estimate.
}	
\end{remark}

In \cite{AED}, the $\Gamma$-equivariant Hopf bifurcation was {studied} using an invariant known as the $\Gamma \times S^1$-equivariant twisted degree. Its values are finite linear combinations of the form 
\begin{equation}\label{eq:-equiv-degree}
\sum_i n_{\varphi_i} (H^{\varphi_i}),
\end{equation}
where $n_{\varphi_i} \in \mathbb Z$ and $(H^{\varphi_i})$ is a twisted orbit type. Generically, the coefficients  $n_{\varphi_i} $ give an algebraic count of 
orbits of type $(H^{\varphi_i})$ and as such, completely describe the $\Gamma \times S^1$-equivariant $J$-homomorphism of an operator involved.

In this paper, we do not compute this total invariant. Instead, we just compute {the $S^1$-equivariant twisted degree 
(in short, $S^1$-degree)} of the {associate} operator restricted to $H^\varphi$-fixed point space {(essentially, we show the non-triviality of the corresponding $S^1$-equivariant $J$-homomorphism).}  The advantage of this approach is that in a number of circumstances, which we illustrate by examples, the total $\Gamma \times S^1$-twisted degree is not defined, however we still succeed to detect branches with various symmetric properties.

{The usage of the total $\Gamma \times S^1$-equivariant twisted degree is effective for  studying {\it global} behavior of branches of periodic solutions,} and in the case when it is defined, each coefficient $n_{\varphi_i}$ (see \eqref{eq:-equiv-degree}) can be recovered by considering {the usual crossing numbers related to the restrictions of the operator involved to $H^{\varphi_j}$-fixed point subspaces with $(H^{\varphi_j}) > (H^{\varphi_i})$ and applying the so-called Recurrence Formula (see \cite{AED}, p. 124 and Theorem 4.25).}


\begin{remark}{\rm 
To verify condition \textbf{(P2)}(iii), one has to
compute multiplicities of the roots of $\Lambda_1^{H^\varphi} (\alpha_{\pm}, \tau, \beta)$ (cf. \eqref{eq:det-Lambda}).
To this end, decompose $^1\widetilde V$ into its $\Gamma \times S^1$-isotypical components 
$$ ^1 \widetilde V =  \, ^1 \widetilde V_1 \oplus \cdots \oplus \,^1 \widetilde V_q,$$ 
where each $^1\widetilde V_k$ is modeled on the irreducible $\Gamma \times S^1$-representation $^1\widetilde {\mathcal V}_k$
($k = 1,...,q$). Fix $\alpha$ and assume that $\lambda_o = \tau_o + i \beta_o$ is a root of $\Delta^{H^\varphi}_1 (\alpha, \tau, \beta)$.
Denote by $E(\lambda_o)$ the (generalized) eigenspace of $\lambda_o$ with respect to $\Delta_1 (\alpha, \tau, \beta)$ and let
$\mathfrak m_k(\lambda) := \dim (E(\lambda_o) \cap  \, ^1 \widetilde V_k)/\dim \,^1\widetilde {\mathcal V}_k$ 
stand for the $^1\widetilde {\mathcal V}_k$-isotypical multiplicity
of $\lambda_o$, $k = 1,...,q$ (cf. \cite{AED}). Put $d_k^{H^\varphi}:= \dim \, ^1\mathcal V_k^{H^\varphi}$.
Then, the multiplicity of $\lambda_o$ considered as a root of $\Lambda_1^{H^\varphi} (\alpha, \tau, \beta)$ is given by 
$$
\sum_{k=1}^q d_k^{H^\varphi} \mathfrak m_k(\lambda).
$$
}
\end{remark}

\section{Proof of Theorem \ref{thm:main-theorem}}\label{proof}

\subsection{Operator {reformulation}}
 \label{subsec-operator-reform}
 We consider the space $C^1=C^1(S^1;V)$ equipped with the standard norm 
 $\|u\|_{C^1} := \{\sup |u(x)| + \sup |\dot{u}(x)| \, : \, x \in S^1\}$.
 Recall that $S^1$ is identified with the segment $[0,2\pi]$ and the spaces $C$, $C^1$ are identified with the spaces of $2\pi$-periodic functions.
 
Define the differention operator $L=\frac{d}{dt}: C^1\to C$ and the 
projector 
$K: C^1\to {C}$ onto the subspace of constant functions given by
\[
K u (t) =\frac1{2\pi}\int_0^{2\pi} u(s)\,ds.
\]
We note that the operator $L+K$ maps 
$C^1$  onto ${C}$ and is invertible. Its inverse operator is defined by 
\[
((L+K)^{-1} u)(t)=\int_0^{2\pi} H(t-s) u(s)\,ds,
\]
where 
\[
H(\tau)=\frac1{2\pi}(1+\pi-\tau), \ \ \ 0\le \tau<2\pi; \qquad H(\tau+2\pi)=H(\tau), \ \ \ \tau\in\mathbb{R},
\]
is the impulse response function of the linear periodic problem
\[
\dot v + \frac1{2\pi}\int_0^{2\pi} v(s)\,ds = u, \qquad u(0)=u(2\pi).
\]
In other words, the bounded operator $(L+K)^{-1}: {C}\to C^1$ is the solution operator of this problem, i.e.~$v=(L+K)^{-1}u$.

Rewriting {\eqref{eq:per-normalization}} as an equivalent equation
\[
\dot u + \frac1{2\pi} \int_0^{2\pi}u(s)\,ds=\beta^{-1} A (\alpha) u + \beta^{-1} f(\alpha, u) + \frac1{2\pi} \int_0^{2\pi}u(s)\,ds
\]
with the $2\pi$-periodic boundary conditions, we see that the periodic problem for \eqref{PNH} is equivalent to the fixed point problem
\begin{equation}\label{eq:fixed-point}
u= {\mathcal J} (L+K)^{-1} \bigl(\beta^{-1}A(\alpha)u +Ku +\beta^{-1} F(\alpha, u)\bigr) =: {T(\alpha,\beta,u)}
\end{equation}	
in the space $\mathbb R^2 \oplus {C}$, where $\mathcal{J}$ is the compact embedding operator from $ C^1$ to 
$ {C}$ and $F : \mathbb R \oplus C \to C$ is given by $F(\alpha,u)(t):= f(\alpha,u(t))$. 
{Also, by condition {\bf (P0)} and compactness of $\mathcal{J}$, the  vector field $\Id - T$ 
is compact.  In addition, formula 
$$(g,e^{i \theta }) u(t):= g u(t - \theta), \quad\quad (g,e^{i\theta}) \in \Gamma \times S^1 = G,$$ 
defines  {\it isometric} Banach $G$-representations on ${C}$ and $ C^1$ and  $\Id - T: 
\mathbb R^2 \oplus C \to C$ is $G$-equivariant (we assume that $G$ acts trivially on $\mathbb R^2$). In what follows, for any $s \in (r,R)$, we are going 
to prove the existence of a solution $(\alpha,\beta,u)$ to \eqref{eq:fixed-point} such that $\|u\|_C = s$ and $G_u = H^{\varphi}$. Due to $G$-equivariance, this is equivalent to studying the solution set of the equation 
\begin{equation}\label{eq:wth-s}
\mathfrak{F}_s(\alpha,\beta,u):= \Big(\|u\|_C - s, u - T(\alpha,\beta, u)\Big)=0,
\end{equation}   
where $(\alpha,\beta) \in \mathbb R^2,$ $u \in C^{H^{\varphi}},$ $s \in (r,R).$ 
}
	
\subsection{Auxiliary lemmas}
{It is easy to see that the subspace $C^{H^{\varphi}} \subset C$ is an isometric $S^1$-representation. Therefore, 
solutions to  \eqref{eq:wth-s} will be studied in the subset $[\alpha_-,\alpha_+] \times [\beta_-,\beta_+] \times C^{H^{\varphi}} \subset \mathbb R^2 \oplus C$ using the $S^1$-degree theory (see  Subsection \ref{subsec:S1-degree}).  As it is common for the application of any (equivariant) degree based methods, our approach includes the following steps:} 

{(a) Construction of an open bounded $S^1$-invariant domain $\Omega \subset [\alpha_-,\alpha_+] \times [\beta_-,\beta_+] \times C^{H^{\varphi}}$ such that related fields and homotopies are $\Omega$-admissible;}

{(b) Construction of an $\Omega$-admissible $S^1$-equivariant deformation of $\mathfrak{F}_s$ to an associated linear field $a_s$;}

{(c) Showing that the $S^1$-degree of  $a_s$ is different from zero;}

{(d) Establishing the existence of (connected)  branches of solutions for equation \eqref{eq:wth-s}.}

\medskip
{To simplify our notations, we identify the set 
$\mathcal D \subset [\alpha_-,\alpha_+] \times \{0\}  \times \mathbb R_+$ with the subset  
of $ [\alpha_-,\alpha_+] \times \mathbb R_+$ via $(\alpha,0,\beta) \to (\alpha,\beta)$ (cf. condition {\bf (P3)}) for which we use the same symbol 
$\mathcal D$. Also, put  
$\mathfrak W := C^{H^\varphi}$ and observe that $\mathfrak W$ 
admits the $S^1$-isotypical decomposition
\begin{equation}\label{eq:isotypical-l}
\mathfrak W = \overline{\bigoplus_{l=0}^{\infty} \mathfrak W_l},
\end{equation}
where $\mathfrak W_0$ coincides with $V^H$-valued constant functions and as such, can be identified with the subspace $V^H$ of the phase space $V$, while $\mathfrak W_l$ can be identified with $^l\til V^{H^\varphi}$(see Section \ref{subsec:main-results}). 

\begin{remark}\label{rem:invertibility}
{\rm Due to conditions {\bf (P1)} and {\bf (P3)}(iii), the operator $\beta L - A(\alpha) : (C^1)^{H^{\varphi}} \to \mathfrak W$ is invertible for every 
$(\alpha,\beta) \in \partial \mathcal D$. Therefore, applying the same argument as in Subsection \ref{subsec-operator-reform}, one can easily show that equation \eqref{eq:wth-s} restricted to $\partial \mathcal D \times \mathfrak M$ is equivalent to
\begin{equation}
\Big(\|u\|_C - s, u - \mathcal J(\beta L - A)^{-1}F(\alpha,u)\Big) = 0,
\end{equation}
where
$(\alpha,\beta) \in \partial \mathcal D,$ $u \in \mathfrak W,$ $s \in (r,R).$}
\end{remark}
\noindent
Define $\Omega \subset [\alpha_-,\alpha_+] \times [\beta_-,\beta_+] \times \mathfrak W$ by 
\begin{equation}\label{eq:domain-Omega}
\Omega: =  \mathcal D \times B_R(0),           
\end{equation}
where $B_R(0) := \{u \in \mathfrak W \, : \, \|u\|_C < R\}$ (cf. conditions {\bf (P3)}--{\bf (P5)}). The following statement is crucial for our considerations. 

\begin{lemma}\label{l1}
Assume that  conditions {\bf (P0)}, {\bf (P1)},  {\bf (P3)}--{\bf (P5)} are satisfied and $\Omega$ is defined by \eqref{eq:domain-Omega}. Then, for any  $\mu \in [0,1]$ and any $s \in (r,R)$, the equation 
\begin{equation}\label{eq:parameter}
\mathcal{F}_s(\alpha,\beta,\mu,u) := \Big(\|u\|_C - s, u - \mu\mathcal J(\beta L - A)^{-1}F(\alpha,u)\Big) = 0
\end{equation}
does not have solutions on $\partial \Omega$.
\end{lemma}
 }

\begin{proof} {Due to the restrictions on $s$, equation \eqref{eq:parameter} does not admit solutions on 
\begin{equation}\label{eq:dom1}
\partial \mathcal D \times \{u \in \overline{B_0(R)} \, : \, \|u\|_C = R \,\, {\rm or} \,\, 0 \leq \|u\|_C \leq r \}.
\end{equation}
For contradiction to the statement of the lemma, assume that \eqref{eq:parameter} admits a solution on
\begin{equation}\label{eq:dom2}
\partial \mathcal D \times \{u \in \overline{B_R(0)} \, : \, r  < \|u\|_C < R \}.
\end{equation}
Denote $v :=F(\alpha,u)$. With this notation, \eqref{eq:parameter}
implies
\[
u=\mu \mathcal J (\beta L - A(\alpha))^{-1} v.
\]
Hence, }
\begin{equation}\label{prohh}
\|u\|_C \le  \mu 
\|(\beta L - A(\alpha))^{-1}\|_{L^2\to C} \|v\|_{L_2} \leq  \|(\beta L - A(\alpha))^{-1}\|_{L^2\to C} \|v\|_{L_2}.
\end{equation}
On the other hand, according to {\bf (P4)}, the relations $ r {<} \|u\|_C {<} R$ imply $\|v\|_C=\|F(\alpha,{u})\|_C\le N(\alpha)\|u\|_C$. Combining this estimate with \eqref{prohh} and $\|v\|_{L_2}\le \sqrt{2\pi}\|v\|_C$, we obtain
\begin{equation}\label{eq:estim1}
\|u\|_C \le  \sqrt{2\pi} N(\alpha)\|(\beta L - A(\alpha))^{-1}\|_{L^2\to C}\|u\|_C.
\end{equation}
The quantity $\|(\beta L - A(\alpha))^{-1} \|_{L_2\to C}$ has already been defined in 
\eqref{linear_bound} as $M^{H^\varphi}(\alpha,\beta)$. By 
{\bf(P5)},
\begin{equation}\label{eq:estim2}
q: = \sqrt{2\pi} N(\alpha) M^{H^\varphi}(\alpha,\beta) < 1.
\end{equation}
This {together with \eqref{eq:estim1}} gives
$$
\|u\|_C\le q\|u\|_C<\|u\|_C,
$$ 
which is a contradiction.
\end{proof}

Define the vector field
\begin{equation}\label{10*}
a_{s,\mu}(\alpha,\beta,u)=\Big(\|u\|_C-s, u- J(L+K)^{-1}\bigl(\beta^{-1}A(\alpha)u +Ku +\mu\beta^{-1} F(\alpha, u)\Big)
\end{equation}
for $(\alpha,\beta,u)\in \partial \Omega$. We note that each of the vector fields \eqref{eq:parameter} and \eqref{10*} is equivalent to the periodic problem
\[
\begin{cases}
\beta \dot u= A(\alpha)u+\mu f(\alpha,u),\\
u(0) = u(2\pi), \ \|u\|_C=s.
\end{cases}
\]
{Therefore, as a consequence of Lemma \ref{l1}, one has the following statement.

\begin{corollary}\label{cor-homotopy}
Assume that conditions {\bf (P0)}, {\bf (P1)},  {\bf (P3)}--{\bf (P5)} are satisfied and $\Omega$ is given by \eqref{eq:domain-Omega}. Then, for any 
$s \in (r,R)$, the vector field 
$\mathfrak F_s$ given by \eqref{eq:wth-s} is $\Omega$-admissibly homotopic to the vector field  
\begin{equation}\label{eq:linear}
a_s(\alpha,\beta, u) := \Big(\|u\|
_C - s, u - {\mathcal J} (L+K)^{-1} \bigl(\beta^{-1}A(\alpha)u +Ku)\Big). 
\end{equation} 
In particular, $S^1\text{\rm -deg}\,(\mathfrak F_s, \Omega)$ and $S^1\text{\rm -deg}\,(a_s, \Omega)$ are correctly defined and coincide (see Subsection \ref{subsec:S1-degree}, property {\bf (A2)}).
\end{corollary}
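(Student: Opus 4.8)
The plan is to take as the connecting homotopy the field $a_{s,\mu}$ already introduced in \eqref{10*}, which simply rescales the nonlinear term by the parameter $\mu\in[0,1]$, and to verify that $h(\mu,\cdot):=a_{s,\mu}$ is an $\Omega$-admissible $S^1$-equivariant homotopy of compact fields joining $\mathfrak{F}_s$ to $a_s$. Once admissibility is in place, the assertion of the corollary is immediate from the homotopy invariance axiom \textbf{(A2)} of the $S^1$-degree.

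First I would identify the two endpoints. Comparing \eqref{10*} with the definition of $T$ in \eqref{eq:fixed-point} and with \eqref{eq:wth-s} shows that $a_{s,1}=\mathfrak{F}_s$, whereas putting $\mu=0$ annihilates the term $\mu\beta^{-1}F(\alpha,u)$ and reproduces the linear field of \eqref{eq:linear}, so that $a_{s,0}=a_s$. The map $(\mu,\alpha,\beta,u)\mapsto a_{s,\mu}(\alpha,\beta,u)$ is jointly continuous by \textbf{(P0)}; since the embedding $\mathcal{J}$ is compact, each $a_{s,\mu}$ is, in its second coordinate, a compact perturbation of $\Id$ (paired with the scalar coordinate $\|u\|_C-s$), and the multiplication by $\mu$ preserves the $S^1$-equivariance inherited from the $G$-equivariance of $\Id-T$. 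Thus $h$ is a bona fide equivariant homotopy on $\overline{\Omega}$, and everything reduces to checking that $a_{s,\mu}$ does not vanish on $\partial\Omega$ for any $\mu$.

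For the admissibility step I would exploit the product structure $\Omega=\mathcal{D}\times B_R(0)$ of \eqref{eq:domain-Omega}, so that $\partial\Omega=(\partial\mathcal{D}\times\overline{B_R(0)})\cup(\overline{\mathcal{D}}\times\{\|u\|_C=R\})$. On points with $\|u\|_C=R$ the scalar coordinate equals $R-s>0$, and on points with $0\le\|u\|_C\le r$ it equals $\|u\|_C-s<0$; in both cases $a_{s,\mu}\ne0$. Hence any conceivable zero must satisfy $\|u\|_C=s$ with $r<s<R$ and therefore lie in $\partial\mathcal{D}\times\{r<\|u\|_C<R\}$. On $\partial\mathcal{D}$, Remark \ref{rem:invertibility} (relying on \textbf{(P1)} and \textbf{(P3)}(iii)) makes $\beta L-A(\alpha)$ invertible, so vanishing of the second coordinate of $a_{s,\mu}$ is equivalent to the periodic problem $\beta\dot{u}=A(\alpha)u+\mu f(\alpha,u)$, and hence to $u=\mu\mathcal{J}(\beta L-A(\alpha))^{-1}F(\alpha,u)$, that is, to $\mathcal{F}_s(\alpha,\beta,\mu,u)=0$ for the field \eqref{eq:parameter}. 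But Lemma \ref{l1} states exactly that \eqref{eq:parameter} has no zeros on $\partial\Omega$ for any $\mu\in[0,1]$ and any $s\in(r,R)$, a contradiction. This proves $a_{s,\mu}$ is $\Omega$-admissible throughout the homotopy, and \textbf{(A2)} then gives that $S^1\text{\rm -deg}\,(\mathfrak{F}_s,\Omega)$ and $S^1\text{\rm -deg}\,(a_s,\Omega)$ are both defined and coincide.

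I expect the one point needing care to be the transfer of Lemma \ref{l1} from the ``inverted'' field \eqref{eq:parameter} to the ``operator'' field \eqref{10*}: the lemma is stated for $\mathcal{F}_s$, while the homotopy is built from $a_{s,\mu}$, and these two are only literally equivalent on $\partial\mathcal{D}$, where the invertibility of $\beta L-A$ supplied by Remark \ref{rem:invertibility} is available. One must therefore confine attention to the portion of $\partial\Omega$ lying over $\partial\mathcal{D}$ before invoking invertibility, dispatch the remaining part of the boundary purely by the scalar coordinate as above, and keep the entire construction inside the fixed-point space $\mathfrak{W}=C^{H^\varphi}$ so that the $S^1$-degree is genuinely applicable.
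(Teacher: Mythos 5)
Your proposal is correct and follows essentially the same route as the paper: the paper likewise uses the one-parameter family $a_{s,\mu}$ of \eqref{10*} as the connecting homotopy, observes that its zeros and those of the field \eqref{eq:parameter} correspond to the same periodic problem $\beta\dot u = A(\alpha)u+\mu f(\alpha,u)$, $\|u\|_C=s$, and then invokes Lemma \ref{l1} together with property \textbf{(A2)}. Your write-up merely makes explicit the details the paper leaves implicit (the endpoint identifications $a_{s,1}=\mathfrak F_s$, $a_{s,0}=a_s$, the splitting of $\partial\Omega$, and the use of Remark \ref{rem:invertibility} to pass between the two fields over $\partial\mathcal D$), which is exactly where the care is needed.
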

}

\subsection{Computation of $S^1\text{\rm -deg}\,(a_s, \Omega)$}
{Corollary \ref{cor-homotopy} essentially reduces studying the solution set of equation \eqref{eq:wth-s} to the computation of $S^1\text{\rm -deg}\,(a_s, \Omega)$. To this end, it is convenient to identify $\overline{\mathcal D}$ with a subset of $\mathbb C$ via 
$(\alpha,\beta) \to \lambda = \alpha + i\beta$, and using {\bf (P3)}(i), to assume without loss of generality that $\overline{\mathcal D}$ is a closed disc of radius $\varepsilon$
centered at $\lambda_o$. 
Put
\begin{equation}\label{eq:a-lambda}
a(\lambda)u := u - {\mathcal J} (L+K)^{-1} \bigl(\beta^{-1}A(\alpha)u +Ku),
\end{equation}
and denote by  $a_l(\lambda)$ the restriction of $a(\lambda)$ to $\mathfrak W_l$ (see \eqref{eq:isotypical-l}). Also,
put
\begin{equation}\label{def_n_k}
\mathfrak n_0 := {\rm sign}\,(\text{det}(a_0(\lambda))), \quad \mathfrak n_l := {\rm deg}\,(\text{det}_{\mathbb C}(a_l(\cdot)),\mathcal D),
\end{equation} 
where ``${\rm deg}$" stands for the usual winding number. By condition {\bf (P1)} (resp. {\bf (P3)}(iii)), $\mathfrak n_0$ is independent 
of $\lambda \in \overline{\mathcal D}$ (resp. $\mathfrak n_l$ is correctly defined).     
Observe also that by compactness of the vector field \eqref{eq:a-lambda}, only finitely many $\mathfrak n_l$ are different from zero.
}

\begin{lemma}\label{lem_deg_val} {Under the assumptions {\bf (P0)}, {\bf (P1)},  {\bf (P3)}--{\bf (P5)} and notations 
\eqref{eq:domain-Omega}, \eqref{eq:linear} and \eqref{def_n_k}, one has}  
\begin{equation}\label{eq:formula-S^1-lin}
S^1\text{\rm -deg}\,(a_s, \Omega) = \mathfrak n_0 \sum_{l=1}^{\infty} \mathfrak n_l(\mathbb Z_l)
\end{equation}
{for every $ r < s < R$.}
\end{lemma}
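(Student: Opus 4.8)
The plan is to compute the $S^1$-degree of the linear field $a_s(\lambda,u)=(\|u\|_C-s,\,a(\lambda)u)$ over $\Omega=\mathcal D\times B_R(0)$ by deforming it into the normalized model $f_a$ of \eqref{eq:f-a} and then invoking formula \eqref{eq:main-formula} together with multiplicativity \textbf{(A5)}. First I would record the explicit isotypical blocks of $a(\lambda)$. On the constant functions $\mathfrak W_0\cong V^H$ one has $Lu=0$ and $Ku=u$, so $a_0(\lambda)=-\beta^{-1}A(\alpha)$, which by \textbf{(P1)} is a linear isomorphism whose Brouwer degree on any ball equals $\mathfrak n_0=\text{sign}\det a_0(\lambda)$, independent of $\lambda\in\overline{\mathcal D}$. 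On $\mathfrak W_l$ with $l\ge 1$, where $Ku=0$ and $L$ acts as multiplication by $il$, one computes $il\beta\, a_l(\lambda)=\Delta^{H^\varphi}_l(\alpha,0,\beta)$, whence $\det_{\mathbb C}a_l(\lambda)=(il\beta)^{-\dim_{\mathbb C}\mathfrak W_l}\Lambda^{H^\varphi}_l(\alpha,0,\beta)$; since $\beta>0$ keeps the prefactor from winding, $\mathfrak n_l$ equals the winding of $\Lambda^{H^\varphi}_l$ along $\partial\mathcal D$ and is well defined by \textbf{(P3)}(iii).

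Next I would perform a finite-dimensional reduction. As $a(\lambda)=\Id-(\text{compact})$ and $a_l(\lambda)=\Id-(il\beta)^{-1}A(\alpha)\to\Id$ uniformly on $\overline{\mathcal D}$, there is an $L$ (chosen to contain every $l$ with $\mathfrak n_l\neq 0$) so that for $l>L$ the block $a_l(\lambda)$ is invertible on all of $\overline{\mathcal D}$ and admissibly homotopic to the identity. Suspension \textbf{(A6)} together with this homotopy removes the modes $l>L$ without altering the $S^1$-degree, leaving an admissible field on $\overline{\mathcal D}\times\overline{B_R(0)}\subset\mathbb C\oplus\bigoplus_{l=0}^{L}\mathfrak W_l$; admissibility on $\partial\Omega$ follows from Lemma \ref{l1} and is preserved because the boundary blocks $a_l(\lambda)$, $\lambda\in\partial\mathcal D$, remain invertible.

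I would then split off the trivial-action summand $\mathfrak W_0=V^H$. Because $a(\lambda)$ respects the isotypical decomposition and $a_0(\lambda)$ is invertible, one can deform the reduced field through admissible fields to a product $\til a_s\times a_0(\lambda)$, where $\til a_s$ acts on $\mathbb C\oplus\mathfrak W'$ with $\mathfrak W'=\bigoplus_{l=1}^{L}\mathfrak W_l$ and $(\mathfrak W')^{S^1}=\{0\}$. Multiplicativity \textbf{(A5)} gives $S^1\text{\rm -deg}(a_s,\Omega)=\mathfrak n_0\cdot S^1\text{\rm -deg}(\til a_s,\mathcal D\times B')$, with $B'\subset\mathfrak W'$ the ball of radius $R$. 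Finally, deforming $\overline{\mathcal D}$ to the standardized region and the radial coordinate $\|u\|-s$ to the combination $|\lambda|(\|v\|-1)+\|v\|+1$ through admissible fields, $\til a_s$ is carried onto $f_a$ with $a=a|_{\mathfrak W'}$ and $\mathcal O$ in place of $\mathcal D\times B'$; then \eqref{eq:main-formula} yields $S^1\text{\rm -deg}(\til a_s,\cdot)=\sum_{l=1}^{L}\deg(\det_{\mathbb C}\circ a_l,\mathcal D)(\mathbb Z_l)=\sum_{l\ge1}\mathfrak n_l(\mathbb Z_l)$, and combining with the previous step proves \eqref{eq:formula-S^1-lin}.

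I expect the main obstacle to be the decoupling of the $l=0$ mode and the subsequent normalization to $f_a$: both require explicit homotopies of the radial coordinate $\|u\|_C$, and since the singular set of $a(\lambda)$ lies in the interior of $\mathcal D$, one must verify that no zeros are created on $\partial\Omega$ during these deformations. The delicate point is that a zero of $a(\lambda)u$ on the face $\overline{\mathcal D}\times\partial B'$ (where $\|u\|_C=R$) must be kept away from the level $s$ throughout, which is why the model $f_a$ is engineered so that its only zeros are $\{v=0,\ |\lambda|=1\}$; matching the winding of $\det_{\mathbb C}a_l$ around $\partial\mathcal D$ with the circle-parametrized family underlying \eqref{eq:main-formula} while preserving admissibility and the $\mathbb Z_l$-isotypical structure is the technical crux. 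The finite-dimensional reduction and the multiplicativity splitting are, by comparison, routine.
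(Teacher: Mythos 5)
Your proposal is correct and follows essentially the same route as the paper's own proof: a finite-dimensional reduction of the compact field via an admissible linear homotopy together with suspension \textbf{(A6)}, then splitting off the invertible $l=0$ block (made $\lambda$-independent by contracting $\overline{\mathcal D}$ to a point) so that multiplicativity \textbf{(A5)} extracts the factor $\mathfrak n_0$, and finally an admissible deformation onto the normalized model $f_a$ so that formula \eqref{eq:main-formula} produces $\sum_{l}\mathfrak n_l(\mathbb Z_l)$. The paper carries out the last step with explicit constructions (the auxiliary radial function $\xi$, excision to an annulus around $\lambda_o$, and the radialization $\eta(\lambda)$ before mapping onto $\mathcal O$), which is precisely the technical crux you flagged but did not execute in detail.
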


\begin{proof} 
	{We will use a modification of the argument given in \cite{AED}.}
The main strategy is to deform the vector field $a_s$ and to modify 
$\Omega$
in such a way that the computational formula \eqref{eq:main-formula} combined with property {\bf (A5)}  of the degree
(see Subsection \ref{subsec:S1-degree}) can be applied. 
{The proof follows three main steps. First, we make a finite-dimensional approximation of the compact vector field $a_s$. We note that each subspace $\mathfrak W_l$ of $\mathfrak W$ is invariant for the compact linear map $a(\lambda)$, and so is any subspace $\mathfrak W^m:= {\bigoplus_{l=0}^m \mathfrak W_l}$. We choose a sufficiently large subspace 
$\mathfrak W^m$  and fix a (closed) linear subspace $Y \subset \mathfrak W$ complementing $\mathfrak W^m$ (without loss of generality, one can assume that $Y$ is also invariant for $a(\lambda)$). 
Now, we define
\begin{equation}\label{eq:decompos}
a^m(\lambda) := a(\lambda)|_{\mathfrak W^m} + \Id |_Y, \quad\quad a^m_s(\lambda,u) := (\|u\|_C - s, a^m(\lambda)u).
\end{equation}
Due to compactness of $a_s$,
the linear homotopy joining $a_s$ and   $a^m_s$ is   $\Omega$-admissible for a sufficiently large $m$.  Put
\begin{equation}\label{eq:finite-dimen}
\Omega^m := \Omega \cap (\mathbb C \oplus \mathfrak W^m), \quad\quad \tilde{a}^m_s := a^m_s |_{\overline{\Omega^m}}.
\end{equation}
Using properties {\bf(A2)} and {\bf(A6)}  (see Subsection \ref{subsec:S1-degree}), one obtains:
\begin{equation}\label{eq:deg-approx}
S^1\text{\rm -deg}\,(a_s, \Omega) = S^1\text{\rm -deg}\,(a^m_s, \Omega) = S^1\text{\rm -deg}\,(\tilde{a}_s^m, \Omega^m).
\end{equation} 
Let $P_l : \mathfrak W^m \to  \mathfrak W_l$ be a canonical equivariant projection (see, for example, \cite{AED}, p. 36).
Then, $\tilde{a}_s^m$ is given by
\begin{equation}\label{eq:projection-l}
\tilde{a}_s^m(\lambda,u) = \Big(\|u\|_C - s, \bigoplus_{l = 0}^ma_l(\lambda)P_lu\Big),
\end{equation}  
Since $\overline{\mathcal D}$ is contractible to $\lambda_o$, there exists a deformation $\mu : \overline{\mathcal D} \times [0,1] \to \overline{\mathcal D}$ such that $\mu(\lambda,0) = 
\lambda$ and $\mu(\lambda,1) \equiv \lambda_o$. 
Since $a_0(\lambda)$ is invertible for every  $\lambda \in \overline{\mathcal D}$, formula
\begin{equation}\label{eq:const-for-a0}
\hat{a}(\lambda,u,\nu):= \Big(\|u\|_C - s, a_0(\mu(\lambda,\nu))P_0 u + \bigoplus_{l=1}^m a_l(\lambda) P_l u\Big) 
\end{equation}
determines an $\Omega^m$-admissible homotopy joining $\tilde{a}_s^m$ with the vector field $\hat{a}$ defined by
\begin{equation}\label{eq:after-def}
\hat{a}(\lambda,u) := \Big(\|u\|_C - s, a_0(\lambda_o)P_0 u + \bigoplus_{l=1}^m a_l(\lambda) P_l u\Big).
\end{equation}
Put
\begin{equation}\label{eq:defs}
B_0: = \{u \in  \mathfrak W_0 \,:\, \|u\|_C < R\}, \;\;\;\; \Omega_{\ast} := \Omega^m \cap \Big(\mathbb C \oplus \bigoplus_{l=1}^m W_l \Big), \;\;\;\;  a_{\ast} := \hat{a}|_{\Omega_{\ast}}.
\end{equation}
Since 
\begin{equation}\label{eq:product-map}
\hat{a} = a(\lambda_o) \times a_{\ast} : \overline{B_{0} \times \Omega_{\ast} }\to  \mathfrak W_0 \times \Big(\mathbb R \oplus \bigoplus_{l=1}^m  
\mathfrak W_l\Big),
\end{equation}
one has (thanks to property  {\bf (A5)} of the degree,  Subsection \ref{subsec:S1-degree}):
\begin{equation}\label{eq:product-map1}
S^1\text{\rm -deg}\,(a_s, \Omega) = n_0 \cdot S^1\text{\rm -deg}\,(a_{\ast}, \Omega_{\ast}).
\end{equation} 
Finally, to compute $S^1\text{\rm -deg}\,(a_{\ast}, \Omega_{\ast})$, take $\xi :   \Omega_{\ast} \to  \mathbb R \oplus \bigoplus_{l=1}^m  \mathfrak W_l$ defined by
$\xi(\lambda,u):= |\lambda - \lambda_o|(\|u\|_C - r) + \|u\|_C + \varepsilon {r / 2}$, put  $\overline{a}:=  \bigoplus_{l=1}^m a_l(\lambda) P_l u$ and observe that the field $a_{\xi} := (\xi, \overline{a})$ is $\Omega_{\ast}$-admissibly homotopic to  $a_{\ast}$.  Since  
\begin{equation}\label{eq:zeros1}
a_{\xi}^{-1}(0) = \{(u,\lambda) \in  \Omega_{\ast} \, : \, u = 0, \; \; \;\;  |\lambda - \lambda_0| = {\varepsilon/ 2}\},
\end{equation}
one can combine property {\bf (A2)} of the degree with its excision property (cf. Remark \ref{rem:excision}) to obtain: 
\begin{equation}\label{eq:applic-excision}
 S^1\text{\rm -deg}\,(a_{\ast}, \Omega_{\ast}) = S^1\text{\rm -deg}\,(a_{\xi},\Omega_{\ast} ) = S^1\text{\rm -deg}\,(a_{\xi},\Omega_1),
\end{equation}
where 
$$
\Omega_1 := \{(u,\lambda) \in  \Omega_{\ast} \, : \, {\varepsilon / 4} < |\lambda - \lambda_o| < \varepsilon \}.
$$
For any $(\lambda,u) \in \Omega_1$, set
$$
\eta(\lambda):= \lambda_o + {\varepsilon (\lambda - \lambda_o) \over 2|\lambda - \lambda_o)|}
$$
and define $b : \Omega_1 \to \mathbb R \oplus  \mathfrak W^m$ by $b(\lambda,u) := (\xi,\overline{a}(\eta(\lambda))u)$. 
From \eqref{eq:zeros1} it follows that  
$a_{\xi}^{-1}(0)  = b^{-1}(0)$, hence $a_{\xi}$ and 
$b$ are $\Omega_1$-admissibly homotopic. 
To complete the proof, it remans: to take a homemorphism of $\Omega_1$ onto  $\mathcal O$ (see \eqref{eq:mathcal-O}), replace the above function 
$\xi$ by $|\lambda|(\|v\|_C-1)+\|v\|_C+1$ (see \eqref{eq:f-a}) 
and apply formula \eqref{eq:main-formula}.}
\end{proof}

{As a consequence of Lemma \ref{lem_deg_val}, we have the following statement.}
	
\begin{corollary}\label{cor:S1-non-zero}
Under the assumptions {\bf (P0)}--{\bf (P5)} and notations \eqref{eq:domain-Omega}, \eqref{eq:linear} and \eqref{def_n_k}, 
$$S^1\text{\rm -deg}\,(a_s, \Omega)  \not=0.$$ 
\end{corollary}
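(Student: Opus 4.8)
The plan is to read the conclusion directly off the degree formula of Lemma \ref{lem_deg_val}. Since the basis elements $(\mathbb Z_l)$, $l\ge 1$, are pairwise distinct, the combination $\mathfrak n_0\sum_{l\ge 1}\mathfrak n_l(\mathbb Z_l)$ is non-trivial as soon as $\mathfrak n_0\neq 0$ and at least one $\mathfrak n_l\neq 0$. I would target $l=1$, since assumptions \textbf{(P2)} and \textbf{(P3)} constrain precisely $\Lambda^{H^\varphi}_1$. Thus the whole statement reduces to the two claims $\mathfrak n_0\neq 0$ and $\mathfrak n_1\neq 0$, which produce a non-vanishing $(\mathbb Z_1)$-coefficient.

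For $\mathfrak n_0$ I would compute $a_0(\lambda)$ explicitly. On the constant functions $\mathfrak W_0\cong V^H$ one has $Lu=0$ and $Ku=u$, so $(L+K)^{-1}$ is the identity there and $a_0(\lambda)u=u-(\beta^{-1}A(\alpha)u+u)=-\beta^{-1}A(\alpha)|_{V^H}u$. Hence $\det(a_0(\lambda))=(-\beta^{-1})^{\dim V^H}\,\Lambda^{H^\varphi}_0(\alpha,0,0)$ (cf.~\eqref{eq:det-Lambda}), which by \textbf{(P1)} never vanishes on the connected set $\overline{\mathcal D}$ (recall $\beta>0$). Therefore $\mathfrak n_0={\rm sign}(\det a_0(\lambda))=\pm 1\neq 0$.

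For $\mathfrak n_1$ I would first identify $a_1(\lambda)$ with the finite-dimensional map $\Delta^{H^\varphi}_1$. On the first Fourier mode $Ku=0$ and $L$ acts as multiplication by $i$, so $a_1(\lambda)=(i\beta)^{-1}(i\beta\,\Id-A(\alpha))|_{{}^1\til V^{H^\varphi}}=(i\beta)^{-1}\Delta^{H^\varphi}_1(\alpha,0,\beta)$, whence $\det_{\mathbb C}(a_1(\lambda))=(i\beta)^{-d}\,\Lambda^{H^\varphi}_1(\alpha,0,\beta)$ with $d=\dim_{\mathbb C}\mathfrak W_1$. Since $\beta>0$ along $\partial\mathcal D$, the factor $(i\beta)^{-d}=\beta^{-d}e^{-id\pi/2}$ has constant argument and contributes zero winding; hence, by multiplicativity of the winding number on $\mathbb C^\ast$, $\mathfrak n_1=\deg(\Lambda^{H^\varphi}_1(\cdot,0,\cdot),\mathcal D)$. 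By \textbf{(P3)}(iii) this winding number is well defined, and by \textbf{(P3)}(ii) it equals the signed count of the zeros of $\Lambda^{H^\varphi}_1|_{\tau=0}$ lying in $\mathcal P_0$.

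It remains to identify this signed count with the crossing number of \textbf{(P2)}(iii), and this is the step I expect to be the main obstacle. Here I would pass to the three-dimensional picture in $(\alpha,\tau,\beta)$-space and use that, for fixed $\alpha$, the map $\Lambda^{H^\varphi}_1(\alpha,\cdot,\cdot)$ is holomorphic in $z=\tau+i\beta$, so that the argument principle counts the roots inside the caps $\mathcal P_\pm$ with positive multiplicities. The zero set of $\Lambda^{H^\varphi}_1$ in $\mathcal P$ is a one-dimensional cobordism (the eigenvalue curves) whose only exits across $\partial\mathcal P$ occur through $\mathcal P_+$, $\mathcal P_-$, $\mathcal P_0$, since \textbf{(P2)}(ii) forbids zeros on the remaining part of $\partial\mathcal P$. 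Conservation of the algebraic count along this cobordism then yields $\deg(\Lambda^{H^\varphi}_1(\cdot,0,\cdot),\mathcal D)=\pm(\#\mathcal P_+-\#\mathcal P_-)$, where $\#\mathcal P_\pm$ denote the numbers of roots of $\Lambda^{H^\varphi}_1$ in $\mathcal P_\pm$ counted with multiplicity; by \textbf{(P2)}(iii) this difference is non-zero, so $\mathfrak n_1\neq 0$. Combined with $\mathfrak n_0\neq 0$ and Lemma \ref{lem_deg_val}, this gives $S^1\text{\rm -deg}\,(a_s,\Omega)\neq 0$. The delicate points are the sign/orientation bookkeeping and making the cobordism argument rigorous near $\mathcal P_0$; I would handle these either by a local normal-form reduction of each crossing or by invoking the standard identification of the isotypical crossing number with precisely such a winding number.
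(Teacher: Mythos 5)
Your overall strategy coincides with the paper's: by \eqref{eq:formula-S^1-lin} it suffices to show $\mathfrak n_0\neq 0$ and $\mathfrak n_1\neq 0$, and your explicit computations $a_0(\lambda)=-\beta^{-1}A(\alpha)|_{V^H}$ and $a_1(\lambda)=(i\beta)^{-1}\Delta^{H^\varphi}_1(\alpha,0,\beta)$, which give $\mathfrak n_0=\pm 1$ via {\bf (P1)} and $\mathfrak n_1=\deg(\Lambda^{H^\varphi}_1(\cdot,0,\cdot),\mathcal D)$ since the prefactor has no winding, are correct (the paper takes these identifications for granted in \eqref{def_n_k}). The divergence is at the step you yourself flag as the main obstacle: passing from this winding number over $\partial\mathcal D$ to the difference of the root counts in $\mathcal P_+$ and $\mathcal P_-$.

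Your proposed justification of that step --- treating $(\Lambda^{H^\varphi}_1)^{-1}(0)\cap\mathcal P$ as a one-dimensional cobordism of eigenvalue curves and invoking conservation of the algebraic count along it --- has a genuine gap under the hypotheses of the theorem: condition {\bf (P0)} only assumes that $A(\alpha)$ is \emph{continuous} in $\alpha$, so the zero set of $\Lambda^{H^\varphi}_1$ in $\mathcal P$ need not consist of curves, let alone a manifold with boundary; eigenvalues can merge, split and oscillate without any transversality, and the ``local normal-form reduction of each crossing'' you suggest as a repair is unavailable. The paper closes exactly this gap by a purely degree-theoretic argument requiring no smoothness or genericity: since $\mathcal P$ is homeomorphic to a closed ball ({\bf (P2)}(i)), $\partial\mathcal P$ is a closed surface, so the total Brouwer degree of $\Lambda^{H^\varphi}_1$ over $\partial\mathcal P$ is defined and equal to zero; by {\bf (P2)}(ii) all zeros on $\partial\mathcal P$ lie in $\mathcal P_+\cup\mathcal P_-\cup\mathcal P_0$, so additivity yields $\deg(\Lambda^{H^\varphi}_1,\mathcal P_+)+\deg(\Lambda^{H^\varphi}_1,\mathcal P_-)+\deg(\Lambda^{H^\varphi}_1,\mathcal P_0)=0$ (formula \eqref{eq:excis}); holomorphy of $\Lambda^{H^\varphi}_1(\alpha_\pm,\cdot,\cdot)$ in $\tau+i\beta$ gives $\mathfrak t_\pm=\pm\deg(\Lambda^{H^\varphi}_1,\mathcal P_\pm)$; and {\bf (P3)}(ii), excision and the $\mathbb Z_2$-equivariance of $\Lambda^{H^\varphi}_1$ identify $\deg(\Lambda^{H^\varphi}_1,\mathcal P_0)$ with $2\mathfrak n_1$, whence $\mathfrak n_1\neq 0$ by {\bf (P2)}(iii). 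So your outline and your computations of $\mathfrak n_0$, $\mathfrak n_1$ are sound, but the conservation identity must be proved by this sphere-degree argument (essentially your second fallback made precise), not by the cobordism picture.
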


\begin{proof}
By condition {\bf (P2)}(i), the local Brouwer degree of $\Lambda^{H^{\varphi}}_1 :  \partial \mathcal P \to \mathbb C$ is correctly defined and equal to zero.
Combining this with condition {\bf (P2)}(ii) and excision of the local Brouwer degree yields
\begin{equation}\label{eq:excis}
\deg(\Lambda^{H^\varphi}_1,\partial \mathcal P) = \deg(\Lambda^{H^\varphi}_1,\mathcal P_+)  + \deg(\Lambda^{H^\varphi}_1,\mathcal P_-) + 
\deg(\Lambda^{H^\varphi}_1,\mathcal P_0) = 0.
\end{equation} 
Denote by $\mathfrak t_{\pm}$ the number of roots of  $\Lambda^{H^\varphi}_1$ in $\mathcal P_{\pm}$ (counted according to their multiplicities). Obviously, $\mathfrak t_{\pm} = \pm \deg(\Lambda^{H^\varphi}_1,\mathcal P_{\pm})$. Combining this with the $\mathbb Z_2$-equivariance of
$\Lambda^{H^\varphi}_1$, conditions {\bf (P2)}(iii) and {\bf (P3)}(ii) and formula \eqref{eq:excis} yields
$$
0 \not= \mathfrak t_- - \mathfrak t_+ = \deg(\Lambda^{H^\varphi}_1,\mathcal P_0) = 2 \deg(\Lambda^{H^\varphi}_1,\mathcal D) = 2\mathfrak n_1,
$$
and the result follows from \eqref{eq:formula-S^1-lin}.
\end{proof}

\subsection{Completion of the proof of Theorem \ref{thm:main-theorem}}
{Combining  Corollaries \ref{cor-homotopy} and \ref{cor:S1-non-zero} with properties {\bf (A1)} and {\bf (A2)} of the degree (see Subsection 
\ref{subsec:S1-degree}) implies the existence of a solution to equation \eqref{eq:wth-s} for each $s\in(0,1)$ and then by compactness also for $s=0,1$. }  

{To show that these solutions are not constant, notice that if $(\alpha_1,\beta_1, u_1)$ is a constant solution of \eqref{eq:wth-s} then $
(\alpha_1,\beta_{\textcolor{blue}{\ast}}, u_1)$ is a solution of \eqref{eq:wth-s} for any $\beta^*$. In particular, we can choose $\beta^*$ such that $
(\alpha_1,\beta^*)\in\partial\mathcal D$ which contradicts Lemma \ref{l1}. 
}

{Finally, to show that the solution set to equation \eqref{eq:fixed-point} contains a compact connected branch joining the spheres 
$\{\|u\|_C=r\}$ and  $\{\|u\|_C=R\}$, one can use the standard technique ((see, for example, \cite{AED,orig_slid} for details) based on the statement 
following below (see \cite{Kurat}, Theorem 3, p. 170).

\begin{lemma}[Kuratowski]\label{lem:Kur}
Let $X$ be a metric space, $A,B \subset X$ two disjoint closed sets, and $K$ a compact set in $X$ such that $K \cap A \not=\emptyset\not=K \cap B$.
If the set $K$ does not contain a connected component $K_o$ such that $K_o \cap A \not=\emptyset\not=K_o \cap B$, then there exist two disjoint open sets $V_1$ and $V_2$ such that $A \subset V_1$, $B \subset V_2$ and $A \cup B \cup K \subset V_1 \cup V_2$.
\end{lemma}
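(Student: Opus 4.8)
The plan is to reduce the statement to the purely topological task of separating $A$ from $B$ by a set that is clopen in $K$, and then to invoke the normality of the metric space $X$. Throughout I would write $K_A := K \cap A$ and $K_B := K \cap B$; since $A \cap B = \emptyset$ these are disjoint nonempty compact subsets of $K$ (closed in the compact set $K$). The hypothesis that no connected component of $K$ meets both $A$ and $B$ says precisely that each component of $K$ is disjoint from at least one of $K_A$, $K_B$.

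First I would produce a set $P$, clopen in $K$, with $K_A \subset P$ and $P \cap K_B = \emptyset$; this is where compactness of $K$ is essential. Since $K$ is a compact subset of the metric (hence Hausdorff) space $X$, within $K$ the connected component of any point coincides with its quasi-component, i.e.\ with the intersection of all relatively clopen subsets of $K$ containing that point. Fix $a \in K_A$. Its component contains $a$, hence meets $K_A$, hence by hypothesis misses $K_B$; so the quasi-component of $a$ is disjoint from the compact set $K_B$. By compactness of $K_B$, already a finite intersection of relatively clopen sets containing $a$ is disjoint from $K_B$; call this clopen set $C_a$, so $a \in C_a$ and $C_a \cap K_B = \emptyset$. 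The family $\{C_a : a \in K_A\}$ is a relatively open cover of the compact set $K_A$; extracting a finite subcover and taking its union yields the required clopen $P$. Setting $Q := K \setminus P$, I obtain disjoint relatively clopen sets with $P \cup Q = K$, $K_A \subset P$, $K_B \subset Q$, both compact.

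Next I would form the two sets $A \cup P$ and $B \cup Q$ and verify they are disjoint and closed in $X$. Each is closed, being the union of a closed set with a compact set. Disjointness follows from $A \cap B = \emptyset$, $P \cap Q = \emptyset$, $A \cap Q = K_A \cap Q = \emptyset$ (as $K_A \subset P$), and $P \cap B = P \cap K_B = \emptyset$ (as $K_B \subset Q$). Since every metric space is normal, these two disjoint closed sets can be separated by disjoint open sets; concretely one may take $V_1 := \{x : d(x, A \cup P) < d(x, B \cup Q)\}$ and $V_2 := \{x : d(x, B \cup Q) < d(x, A \cup P)\}$. Then $A \subset A \cup P \subset V_1$ and $B \subset B \cup Q \subset V_2$, and because $K = P \cup Q$ with $P \subset V_1$, $Q \subset V_2$, we conclude $A \cup B \cup K \subset V_1 \cup V_2$, as desired.

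I expect the main obstacle to be the first step, the construction of the clopen partition of $K$. The crux is the coincidence of components and quasi-components in the compact set $K$: this is exactly what converts the component-level hypothesis (a statement about possibly infinite intersections) into a finite, relatively clopen separation to which a compactness extraction applies. Without compactness of $K$ this equality fails in general, and the conclusion can break down; everything after this step is routine set-theoretic bookkeeping and the standard normality of metric spaces.
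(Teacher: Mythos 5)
The paper does not actually prove this lemma: it is imported verbatim from Kuratowski's \emph{Topology}, Vol.~II (cited as Theorem 3, p.~170) and used as a black box to pass from degree-theoretic solvability of \eqref{eq:wth-s} for each $s$ to the existence of a connected branch. So there is no in-paper argument to compare yours against; your proof has to stand on its own, and it does. Your reduction is the standard one and is complete: the only nontrivial ingredient is the coincidence of connected components and quasi-components in a compact Hausdorff space (the \v{S}ura-Bura property), which you invoke correctly to convert the hypothesis ``no component of $K$ meets both $A$ and $B$'' into a relatively clopen partition $K = P \cup Q$ with $K \cap A \subset P$ and $K \cap B \subset Q$; the two compactness extractions (first against the compact set $K \cap B$ to get a single clopen $C_a$, then over a finite subcover of $K \cap A$) are exactly what is needed, and the final separation of the disjoint closed sets $A \cup P$ and $B \cup Q$ via the distance functions $d(\cdot, A\cup P)$ and $d(\cdot, B \cup Q)$ is valid in any metric space. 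The one point worth flagging is that ``components $=$ quasi-components on a compactum'' is itself a theorem requiring proof; if you want your write-up fully self-contained you should either prove it or cite it explicitly (e.g., Engelking, \emph{General Topology}, Theorem 6.1.23), but as an argument nothing is missing or incorrect.
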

}

\section{Examples}\label{examples}
In this section we 
consider applications of Theorem \ref{thm:main-theorem}. 
In the first example we consider a system without symmetry and compute estimates of $r$ and $R$.
 That is, we guarantee not only the existence of a branch of periodic solutions but also estimate its length.
Further examples refer to a number of 
circumstances where 
standard genericity assumptions are not satisfied but Theorem \ref{thm:main-theorem} is applicable. 
Here estimates of the length of the branch
are also possible to obtain but for convenience we consider nonlinearities with  sublinear growth and only present results of the form ``there exists an $R$ such that there is a branch of solutions joining the trivial equilibrium to the sphere of radius $R$''.

In the second example, we consider a system of coupled oscillators which undergoes a Hopf bifurcation and a steady state bifurcation simultaneously. In this case, restriction to $H^\varphi$-fixed point spaces allows us to ``separate'' these two bifurcations. An additional assumption that the nonlinearity of individual oscillators is odd allows us to refine these results to be more inclusive of various {non-generic} scenarios.

Finally, in the third example we treat the case when a pair of purely imaginary eigenvalues persists
independently of the bifurcation parameter, and other eigenvalues cross through them as the parameter is varied. Again, in this case the restriction to $H^\varphi$-fixed point spaces allows us to ``separate'' these eigenvalues.

\subsection{Example 1}
Consider a single Van der Pol oscillator given by 
\begin{equation}\label{ind_vdp}
\begin{aligned}
\dot x_1 &= x_2,\\
\dot x_2 &= -x_1 - x^2_1 x_2 + \alpha x_2.
\end{aligned}
\end{equation}
In this case, the group $H^\varphi$ is trivial, $$A(\alpha)= \begin{bmatrix}
  0 & 1\\
  -1 & \alpha 
\end{bmatrix}$$ and $f(\alpha, x) = (0,-x_1^2x_2)^T.$ To apply the main theorem we begin by identifying that $$\Lambda_l(\alpha, \tau, \beta)= l^2(\tau+i\beta)^2 - \alpha l(\tau+i\beta) + 1.$$ Given any $\alpha_-<0<\alpha_+$, it is easy to see that if we take $$\mathcal P = [\alpha_-,\alpha_+] \times [0,\tau_*] \times [0,\beta_*]$$
with sufficiently large $\tau_*, \beta_*$, then
conditions {\bf(P0)-(P2)} are satisfied. The main challenge is now to construct the set $\mathcal D$ in such a way that condition {\bf (P3)} is satisfied and the estimate for $R$ can be optimized. Satisfying condition {\bf (P3)} only requires  that $(\alpha,\beta)=(0,1)\in \mathcal D$, while $(0,1/l) \notin \partial \mathcal D$ for $l\ge 2$. In particular, if $(\alpha,\beta)\neq (0, 1/l)$ for $l\in\mathbb{N}$, then the number  \eqref{linear_bound} is given by
\[
M(\alpha,\beta)=\Big( \sum_{l=0}^\infty \frac{2(l^2\beta^2+1)+\alpha^2}{(l^2\beta^2-1)^2+l^2\alpha^2\beta^2} \Big)^{1/2}.
\]
Hence, if we take $\mathcal D$ to be any  sufficiently  small disk surrounding the point $(0,1)$, then $$
\mathcal N := \inf_{(\alpha,\beta)\in \partial \mathcal D} \frac{1}{\sqrt{2\pi}M(\alpha,\beta)} >0.
$$
The
form of $f$ implies that $|f(\alpha,x)|\le \mathcal N|x|$ for all $x=(x_1,x_2)^T$ with $|x|\le \sqrt \mathcal N$. We can therefore see that conditions {\bf (P4)}, {\bf (P5)} are satisfied with $N(\alpha)=\mathcal N$, $r=0$ and any $R <\sqrt \mathcal N$. Therefore Theorem \ref{t1} guarantees the existence of a branch of periodic solutions joining the zero equilibrium with the sphere $\{\|x\|_C=\sqrt{\mathcal N}\}$.

However we can try to maximize the number $\mathcal N$ by choosing an appropriate domain $\mathcal D$. For $\mathcal D$ to include the point $(0,1)$ the boundary $\partial \mathcal D$ must intersect the line segment $(0, \beta)$, $\beta \in (0,1)$. If we therefore identify the point $(0, \beta_*)$ which minimizes the function 
$
M(0,\beta)
$ 
along this line segment and take $\partial {\mathcal D}$ to be the level curve of $M(\alpha,\beta)$ passing through that point, then we can maximize the number $\mathcal N$. 
Noting that the function
\[
M(0,\beta)=1+\left(\frac{\pi}{\beta} \csc \frac{\pi}{\beta} \right)^2
\]
achieves its global minimum at the point $\beta_*\approx0.699$ which is determined as a root of the equation $\tan (\pi/\beta)=\pi/\beta$,
in this way we obtain 
that there exists a branch of periodic solutions joining the trivial equilibrium to the sphere of radius $R\approx 0.3$, see Figure \ref{f1}. The same scheme can be used to construct the domain $\mathcal D$ satisfying the conditions of Theorem \ref{t1} and obtain an estimate for $R$
for the general non-equivariant system \eqref{PNH} undergoing a generic Hopf bifurcation.

\begin{figure}[htb!]
	\centering
		\includegraphics[width=0.45\textwidth]{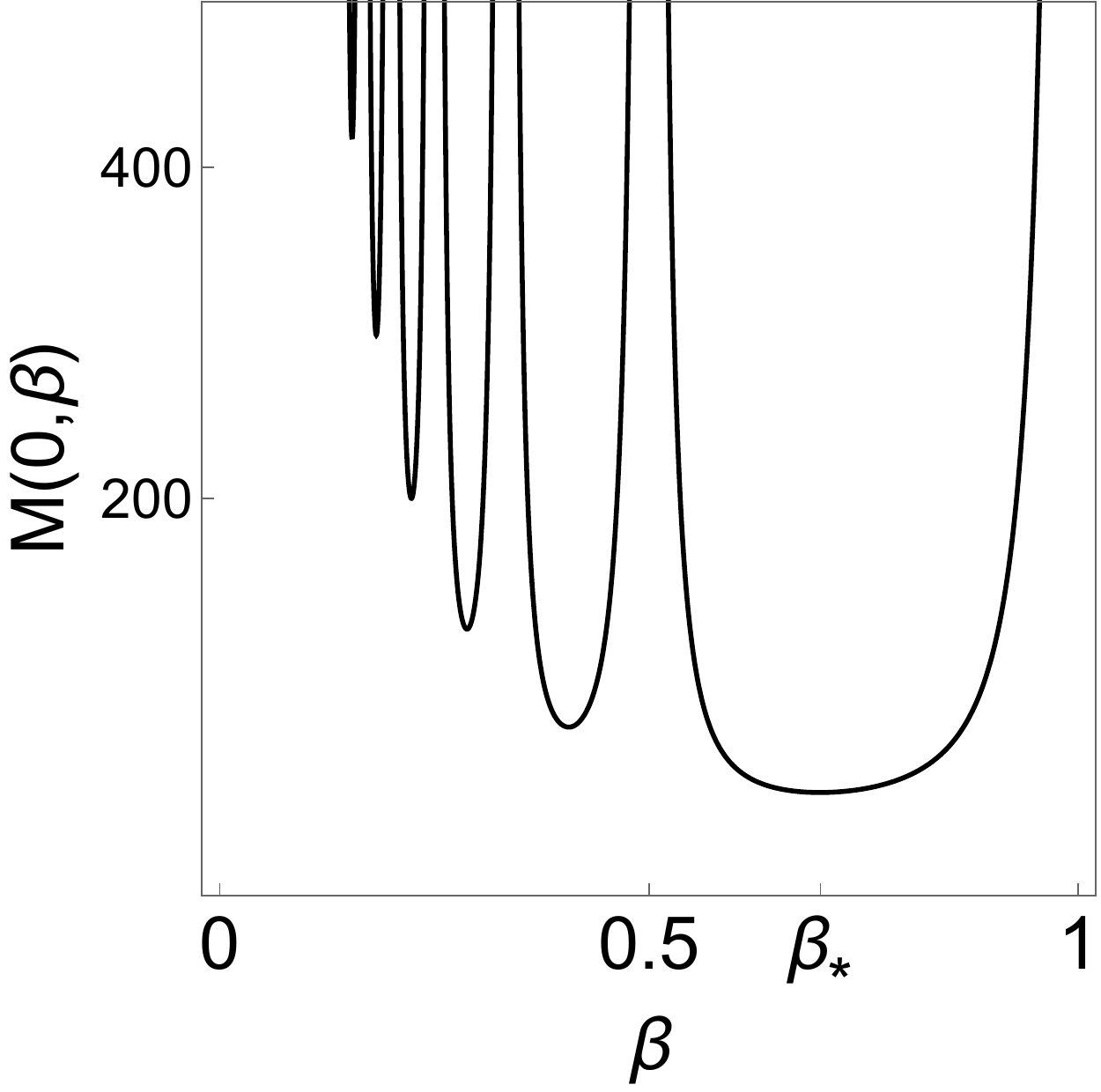} \qquad		\includegraphics[width=0.46\textwidth]{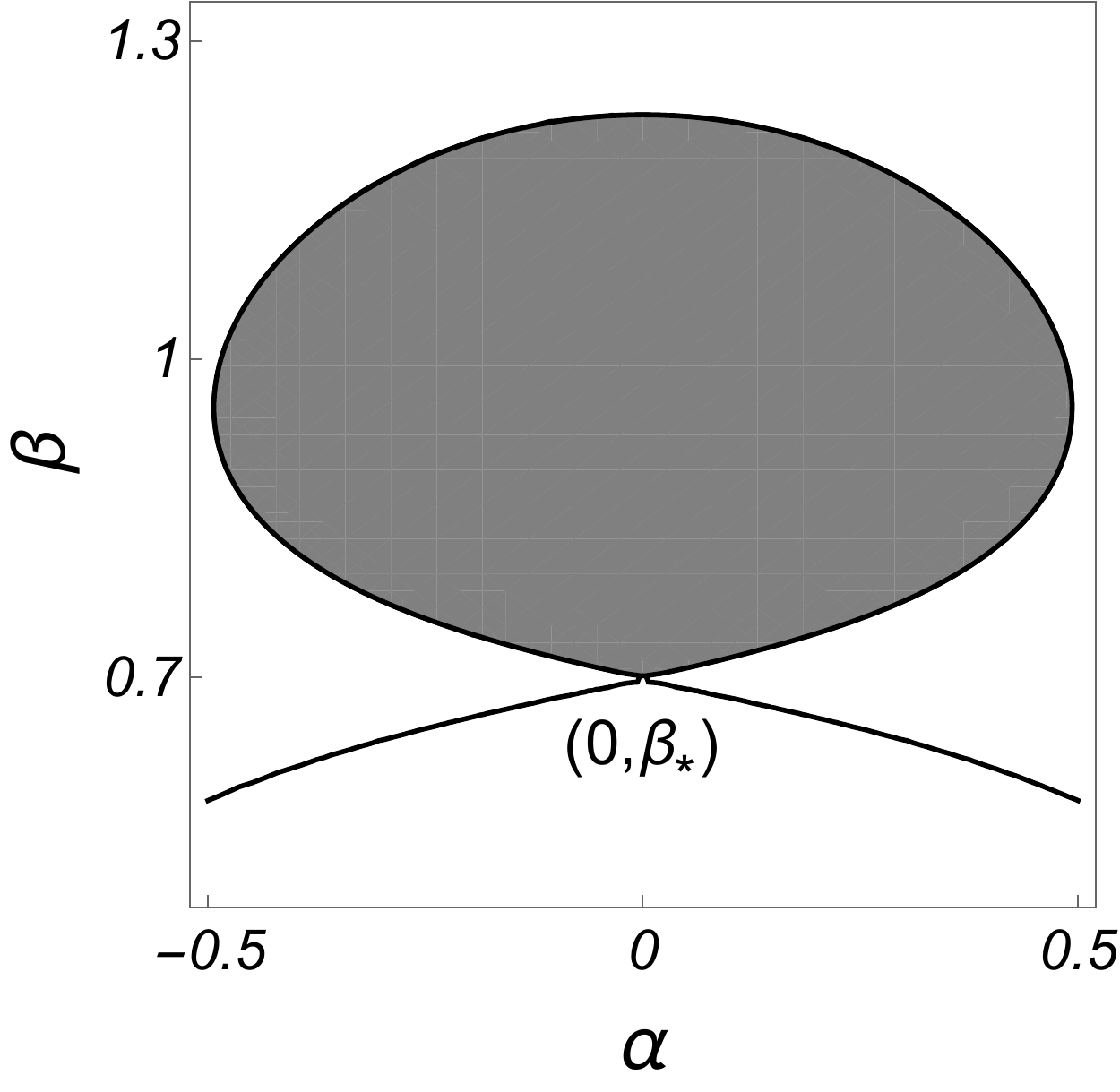}\\
\qquad	{\bf (a)} \qquad\qquad\qquad\qquad\qquad\qquad\qquad\qquad {\bf (b)} 
		\caption{Left: The function $M(0,\beta)$ with the global minimum at the point $\beta_*\approx 0.7$. Right: The domain ${\mathcal D}$ (gray) bounded by the level curve $M(\alpha,\beta)=M(0,\beta_*)$ of the function $M(\alpha,\beta)$.}
\label{f1}
\end{figure}

\subsection{Example 2}
Our second example illustrates the situation where the spectrum of $A(\alpha)$ contains purely imaginary eigenvalues for some value $\alpha_0$ of the parameter $\alpha$ and, simultaneously, $\det A(\alpha_0) = 0$. This contradicts the usual ``absence of the steady state bifurcation'' condition. However, we overcome this by considering \eqref{eq:det-Lambda} instead of \eqref{eq:without} with properly chosen $H^{\varphi}$ and applying 
Theorem \ref{thm:main-theorem}. In the following example complex eigenvalues cross the imaginary axis transversally at the bifurcation point $\alpha=\alpha_0$. We use the notations adopted 
in \cite{BKRZ-Hysteresis} and \cite{HBKR}.

We begin with the system
\begin{equation}\label{eq:VdP-1}
\begin{aligned}
\dot j_m &= -{R \over L} j_m + {1 \over L} u_m,\\
\dot u_m &= -{1 \over C} j_m +{\alpha \over C} u_m - {\sigma \over C} u^3 _m
\end{aligned}
\end{equation}
describing an LCR circuit with a cubic current-voltage characteristic (here $\alpha$ is a bifurcation parameter),
which can be rewritten as the classical Van der Pol equation.  We further consider a symmetrically coupled system of eight identical oscillators  \eqref{eq:VdP-1}, which are arranged in a cube-like configuration. More precisely, using the vector notation $j= (j_1,...,j_8)^T$, $u = (u_1,...,u_8)^T$ and $u^3 = (u_1^3,...,u_8^3)^T$, one can represent the corresponding system as follows:
\begin{equation}\label{eq:VdP-8}
\begin{aligned}
\dot j &= -{R \over L} j + {1 \over L} u,\\
\dot u &= -{1 \over C} j +{\alpha \over C} u - {\sigma \over C} u^3 + {\rho \over 2C}\mathcal K u,
\end{aligned}
\end{equation}
where   
\begin{equation}\label{eq:K}
\mathcal K =\begin{bmatrix}
    -3 & 1 & 0 & 1 & 1 & 0 & 0 & 0 \\
    1 & -3 & 1 & 0 & 0 & 1 &  0 & 0 \\
    0 & 1 & -3 & 1 & 0 & 0 &  1 & 0  \\
    1 & 0 & 1 & -3 & 0 & 0 & 0&  1  \\
     1 & 0 & 0 & 0 & -3 & 1 & 0 & 1 \\
     0 & 1 &  0 & 0 & 1 & -3 & 1 & 0 \\
     0 & 0 &  1 & 0 & 0 & 1 & -3 & 1   \\
     0 & 0 & 0&  1 & 1 & 0 & 1 & -3 
    \end{bmatrix}
\end{equation}
(we assume that the oscillators are coupled by resistors having the same   conductivity $\rho$ as in \cite{BKRZ-Hysteresis}).  Denote 
by $V:= \mathbb R^{16}$ the phase space of \eqref{eq:VdP-8}. Clearly, $V$ is an $O_4$-representation, where $O_4 =  S_4 \times O_1$ 
acts by permuting pairs of coordinates $(j_m,u_m)$, $m = 1,...,8$. In addition, system \eqref{eq:VdP-1} respects the antipodal symmetry, meaning that 
system \eqref{eq:VdP-8} is $\Gamma := \mathbb Z_2 \times O_4$-equivariant (see Appendix for the explicit description of $O_1$ and $\Gamma$). The 
$\Gamma$-representation $V$ admits the isotypical decomposition
\begin{equation}\label{eq:isotyp}
V = V_0 \oplus V_1 \oplus V_2 \oplus V_3,
\end{equation} 
where each $V_k$, $k = 0,1,2,3$, is of isotypical multiplicity two and is modeled on the irreducible $\Gamma$-representations $\mathcal W_0,\mathcal W_1, \mathcal W_2, \mathcal W_3$ respectively, which can be described as follows. Let $\mathcal B^+$ and $\mathcal B^-$ be the one-dimensional  
$\mathbb Z_2 \times O_1$-representation where $\mathbb Z_2$ acts antipodally on both $\mathcal B^+$ and $\mathcal B^-$ while $O_1$ acts trivially on $\mathcal B^+$ and antipodally on $\mathcal B^-$.  Let $\mathcal V_0$ (resp. $\mathcal V_ 3$) be the one-dimensional trivial (resp. sign) 
$S_4$-representation, let $\mathcal V_2$ be the natural three-dimensional $S_4$-representation, where $S_4$ acts as a subgroup of $SO(3)$, and let $\mathcal V_1 := \mathcal V_2 \otimes \mathcal V_3$. Then, 
\begin{equation}\label{eq:irred-model}
\mathcal W_0 = \mathcal V_0 \otimes \mathcal B^+, \;\; \mathcal W_1 = \mathcal V_1 \otimes \mathcal B^-, \;\; \mathcal W_2 = \mathcal V_2 \otimes \mathcal B^+, \;\; \mathcal W_3 = \mathcal V_3 \otimes \mathcal B^-.
\end{equation}

Denote by $A=A(\alpha)$ the linearization of the right-hand side of \eqref{eq:VdP-8} at the origin:
\begin{equation}\label{111}
A=
\left[\begin{array}{cc} -{R \over L} & {1 \over L} \\
-{1 \over C} & \alpha \over C
\end{array} \right]\otimes \text{Id}_8 + \rho \left[\begin{array}{cc} 0 & 0\\ 0 & {1 \over 2C}
\end{array}\right] \otimes \mathcal K.
\end{equation}
By choosing an appropriate basis in $V$ respecting isotypical decomposition \eqref{eq:isotyp}, one can show (see \cite{BKRZ-Hysteresis,HBKR}) that
$A(\alpha)$ admits a block diagonal representation with $8$ two-by-two blocks
\begin{equation}\label{eq:2-2-blocks}
 A_k(\alpha)= \begin{bmatrix}
  -\frac{R}{L} & \frac{1}{L}\\
  -\frac{1}{C} & \frac{\alpha}{C} - \frac{k\rho}{C}
  \end{bmatrix},
 \end{equation}
where $A(\alpha)|_{V_k} = A_k(\alpha)$ for $k = 0,3$ and $A(\alpha)|_{V_k} = A_k(\alpha) \oplus A_k (\alpha)\oplus A_k(\alpha)$ for $k = 1,2$.
Further, if ${R^2C} < L$, then $A(\alpha)$ has purely imaginary eigenvalues when $$\alpha = \alpha^h_j:=  {RC}/{L} + {j}\rho,\qquad j=0,1,2,3,$$ and $A(\alpha)$ is not invertible when 
$\alpha = \alpha^s_k ={1}/{R}+ {k} \rho$, $k = 0,1,2,3$. Following \cite{BKRZ-Hysteresis}, assume that ${R^2C} < L$ and define
$$
\mathcal C := \frac{1}{\rho R}\Big(1-\frac{R^2C}{L}\Big) > 0.
$$
With this notation, the scenario when $\alpha^h_j = \alpha^s_k$ corresponds to
\begin{equation}\label{eq:simult-Hopf-steady}
\mathcal C = j -k >0\quad \text{for some} \;\; k,j  = 0,1,2,3.
\end{equation} 
Therefore, if $j=0,1,2,3$ and $\mathcal C$ is {\em not} an integer satisfying $\mathcal C\le j$, then the steady state bifurcation is {\em a priori} excluded at the point $\alpha=\alpha^h_j$ (i.e.~$\alpha^h_j\ne \alpha^s_k$ because \eqref{eq:simult-Hopf-steady} is violated), which implies in a standard way that
$\alpha=\alpha^h_j$ is a Hopf bifurcation point for system \eqref{eq:VdP-8}. Moreover, due to symmetries, the Hopf bifurcation points $\alpha=\alpha^h_1$, $\alpha=\alpha^h_2$ give rise to multiple branches of periodic solutions, which can be distinguished by their maximal symmetry group \cite{HBKR,BKRZ-Hysteresis}.
Table \ref{tab_clas} presents spatio-temporal symmetries of multiple branches bifurcationg from the four bifurcation points in this case.

An interesting case is when \eqref{eq:simult-Hopf-steady} holds for some $j,k$.
This case can be handled by Theorem \ref{thm:main-theorem}. In fact, by direct verification (see Appendix), all
the twisted subgroups $H^{\varphi}$ appearing in Table \ref{tab_clas} have the element $(-1,(),{1 \over 2})$. Therefore, $V^H$ (which can be identified with $H$-fixed constant functions) is trivial, hence condition {\bf (P1)} is trivially satisfied. Since all the other assumptions of Theorem  \ref{thm:main-theorem}
are obviously also satisfied, we conclude that  Table \ref{tab_clas} applies to system \eqref{eq:VdP-8} 
in the case \eqref{eq:simult-Hopf-steady} too.
In particular, if $\mathcal C=1,2,3$, some of the Hopf bifurcations listed in Table \ref{tab_clas} are simulatneous with the steady state bifurcation.

\begin{remark}\label{rem:oddness}
{\rm Let us consider system  \eqref{PNH} with $V=\mathbb{R}^{16}$,
where $A(\alpha)$ is given by \eqref{eq:K}, \eqref{111} and $f : \mathbb R \times V \to V$ is an arbitrary 
$\Gamma := \mathbb Z_2 \times O_4$-equivariant continuous function satisfying $f(x,\alpha)/|x|\to 0$ as $x\to 0$ for all $\alpha$. 
Since the above argument was based on the linearization at zero,  Table \ref{tab_clas} applies to this system for every $\mathcal C>0$. 
}
\end{remark}

\medskip
Now, let us consider system \eqref{PNH} with the linear part defined by \eqref{eq:K}, \eqref{111} assuming that $f$ is $O_4$-equivariant but not necessarily $\mathbb Z_2 \times O_4$-equivariant (as in Remark \ref{rem:oddness}). For example, one can think of a system of eight coupled identical oscillators similar to \eqref{eq:VdP-8}, in which the cubic nonlinearity of an individual oscillator is replaced with a polynomial nonlinearity which is not odd. In this case, the results obtained in \cite{BKRZ-Hysteresis,HBKR} imply that
if $\mathcal C\ne 1,2,3$ and $\mathcal C>0$, then Table  \ref{tab_clas} should be slightly modified. Namely, each twisted subgroup ${}^{+}H^{\varphi}$ (resp. ${}^{-}H^{\varphi}$) is replaced by  ${}^{+}\overline{H}^{\varphi}$ (resp. ${}^{-}\overline{H}^{\varphi}$), see Appendix for the explanation of the notation. The question, whether the same table applies in the cases \eqref{eq:simult-Hopf-steady} when a Hopf bifurcation is simultaneous with a steady state bifurcation (i.e., $\mathcal C = 1,2,3$) is more subtle than for the $\mathbb Z_2 \times O_4$-equivariant system considered above. That is, Theorem \ref{thm:main-theorem} can still be used in some of the cases \eqref{eq:simult-Hopf-steady} but not in all of them.


To be more specific, consider, for example, the branch with symmetry $^+\overline{D}^d_4$ which can potentially bifurcate from the trivial solution at $\alpha^h_2$.
By definition, $^+\overline{D}^d_4$ is a graph of the homomorphism $\varphi : D_4 \times O_1 \to \mathbb Z_2 \subset S^1$. By direct computation
 (cf. \eqref{eq:isotyp} and \eqref{eq:2-2-blocks}),
\begin{equation}\label{eq-V-D_4O_1} 
V^{D_4 \times O_1} = V_0 \quad \text{and}  \quad 
A(\alpha^h_2)|_{V^{D_4 \times O_1}}= A_0(\alpha^h_2) = 
\begin{bmatrix}
  -\frac{R}{L} & \frac{1}{L}\\
  -\frac{1}{C} & \frac{\alpha^h_2}{C}
  \end{bmatrix}.
\end{equation}
Combining \eqref{eq-V-D_4O_1} with \eqref{eq:simult-Hopf-steady} implies that if $\mathcal C = 1$, then $\big(\ker A \big) \cap V^{D_4 \times O_1} = \{0\}$,
and condition  {\bf (P1)} is satisfied. A similar argument shows that {\bf (P1)} is also satisfied for other branches bifurcating from the point $\alpha^h_2$ for $\mathcal C=1$. Hence, Theorem \ref{thm:main-theorem} ensures that $\alpha^h_2$ is a Hopf bifurcation point giving rise to multiple branches of periodic solutions with symmetries $(^+\overline{D}_4^d), (^+\overline{D}_3), (^+\overline{D}_2^d), (^+\overline{\mathbb Z}_4^c), (^+\overline{\mathbb Z} _3^t) $ for the $O_4$-equivariant system \eqref{PNH} with $\mathcal C=1$ and, simultaneously, $\alpha^h_2$ is a steady state bifurcation point. However,  if $\mathcal C=2$, then condition {\bf (P1)} is not satisfied at the point  $\alpha=\alpha^h_2$.

Similarly, Theorem \ref{thm:main-theorem} guarantees the Hopf bifurcation of periodic solutions with symmetry $(^-\overline{S}_4^-)$ at the point  $\alpha=\alpha^h_3$ (with a simultaneous steady state bifurcation) for $\mathcal C=2$ but not for $\mathcal C=1$.


\begin{table}[]
\centering
\caption{Symmetries of periodic solutions bifurcating from four Hopf bifurcation points of $\mathbb{Z}_2\times O_4$-equivariant system \eqref{PNH} with the linear part defined by \eqref{eq:K}, \eqref{111} for any $\mathcal C>0$.}

\medskip

\label{tab_clas}
\begin{tabular}{|c|c|}
\hline Bifurcation point & 
Symmetry group of periodic solutions
\\ \hline 
$\alpha =\alpha_0^h$ & $(^+S_4)$ \\ \hline
$\alpha =\alpha_1^h$ &   $(^-D_4^z),  (^-D_3^z),  (^-D_2^d), (^-\mathbb Z_4^c),  (^-\mathbb Z _3^t)$\\ \hline
$\alpha =\alpha_2^h$ &  $(^+D_4^d), (^+D_3), (^+D_2^d), (^+\mathbb Z_4^c), (^+\mathbb Z _3^t)   $\\ \hline
$\alpha =\alpha_3^h$ & $(^-S_4^-)$  \\ \hline
  \end{tabular}
\end{table}

\begin{remark}\label{rem:hysteres}
{\rm 
	Including a ferromagnetic core in an inductor 
	can cause a hysteretic relationship between the magnetic induction $B$ and the magnetic field $H$. In this case, the instantaneous value of $B$ depends not only on the value of $H$ at the same moment, but also on some previous values of $H$. Hence, the constitutive relationship between $B$ and $H$ is an operator relationship,  which translates into a similar operator relationship between the voltage $v_m$ and the current $i_m$ in an LCR contour with a ferromagnetic-core inductor. The Preisach model is a widely used description of such an operator constitutive relationship, defining the dependence of $B$ on $H$ in ferromagnetic materials (see, for example, \cite{Mayergoyz}). The hysteresis memory is the source of {\it non-smoothness} and the presence of an infinite dimensional phase space without local linear structure. Hence, the application of the classical methods based on the centre manifold reduction to systems with hysteresis meets serious difficulties. However, following the scheme described in \cite{BKRZ-Hysteresis,Rachi} and using Theorem \ref{thm:main-theorem}, one can obtain equivariant bifurcation results for networks of LCR circuits with a hysteretic relationship between $B$ and $H$, which are parallel to the results discussed above.
}
\end{remark}

\subsection{Example 3}
In this example, we consider system $\dot x=A(\rho) x + f(\rho,x)$ with the linearization $A=A(\rho)$ given by 
\eqref{111} but this time we use $\rho$ (the coupling strength) as the bifurcation parameter.
The other parameters are fixed. In particular, we assume that $\alpha=RC/L$.
In this case, the spetrum of $A(\rho)$ consists of the eigenvalues $\pm i \omega$ of multiplicity 8 for $\rho=0$ (with $\omega=1/\sqrt{LC}$).
%
%
Let the phase space $V:= \mathbb R^{16}$ of the system be the $O_4$-representation described in Example 2,
and assume again that $f : \mathbb R \times V \to V$ is an $O_4$-equivariant continuous function satisfying $f(\rho,x)/|x|\to 0$ as $x\to 0$.
Formula \eqref{eq:K} implies 
\begin{equation}\label{eq:K-kernel}
\ker \mathcal K = \{(x_1,...,x_8) \, : \, x_1 = \cdots = x_8\} =  (\mathbb R^8)^{O_4},
\end{equation}
therefore $A(\rho)$ has the same pair of eigenvalues $\pm i \omega$ corresponding to the eigen\-space $V_0$ (see \eqref{eq:isotyp}) for all values of the parameter $\rho$, 
while the other seven pairs of complex conjugate eigenvalues of  $A(\rho)$ cross the imaginary axis transversely through the pair $\pm i\omega$ for $\rho=0$. 
This is a degenerate situation because the crossing number is not defined, however we can use Theorem \ref{thm:main-theorem}.


The complexification of the phase space is an $O_4\times S^1$-representation admitting the isotypical decomposition
$$
{}^1\widetilde {V}= {}^1\widetilde V_0\oplus {}^1\widetilde V_1 \oplus {}^1\widetilde V_2 \oplus {}^1\widetilde V_3,
$$ 
where $^1\widetilde V_k$ is modeled on the irreducible representation $^1\widetilde {\mathcal V_k}$ (recall that all the $O_4$-isotypical components of $V$ are modeled on irreducible representations of {\it real} type). In order to apply Theorem  \ref{thm:main-theorem}, one needs to choose maximal twisted subgroups $H^{\varphi}$ occurring in ${}^1\widetilde V_k$ with $k = 1,2,3$ such that 
\begin{equation}\label{eq:non-itersect-cond-D3}
^1\widetilde {\mathcal V_0} ^{H^\varphi} = \{0\}.
\end{equation}
It is easy to verify that with the exception of $D_3$, condition \eqref{eq:non-itersect-cond-D3} is satisfied for all maximal twisted subgroups.
Hence, 
Theorem \ref{thm:main-theorem} guarantees the existence of bifurcating branches of periodic solutions with symmetries $(^-\overline{D}_4^z),(^-\overline{D}_3^d)$, 
$(^-\overline{D}_2^d),(^-\overline{\mathbb Z}_4^c),(^-\overline{\mathbb Z}_3^t), (^+\overline{D}_4^d), (^+\overline{D}_2^d),(^+\overline{\mathbb Z}_4^c),
(^+\overline{\mathbb Z}_3^t),(^-\overline{S}_4^-)$. All these branches bifurcate from the trivial solution at the bifurcation point $\rho=0$.

\section{Appendix}\label{appendix}
Given a cube with subsequent vertices $1,2,3,4$ on one facet, and subsequent vertices $5,6,7,8$ on the opposite facet, with the vertices $1$ and $5$ connected by an edge, denote by $O_4$ the subgroup of the symmetry group $S_8$ consisting of all symmetries of the above cube, and by $S_4$ the subgroup of $O_4$ consisting of all symmetries of the cube preserving its orientation. As is well-known, $S_4$ can be thought of as the group of permutations of the large diagonals of the cube. Denote by $O_1$ a subgroup 
of $O_4$ generated by the permutation $(17)(28)(35)(46)$. Clearly, $O_1$ is isomorphic to $\mathbb Z_2$ and $O_4 = S_4 \times O_1$.

Define two subgroups $(\mathbb Z_2\times O_1  )^{o}, (\mathbb Z_2 \times O_1  )^{oz} < \mathbb Z_2 \times O_4 \times S^1=: G$ (both isomorphic to 
$\mathbb Z_2\times O_1$)  by
\begin{align*}
(\mathbb Z_2\times O_1  )^{o} :=& \Big\{\big(1,( ),0\big), 
 \big(1,(17)(28)(35)(46),0\big), \big(-1,( ),1/2\big),  \\&
 \big(-1,(17)(28)(35)(46),1/2\big)\Big\},
\end{align*}
 \begin{align*}
 (\mathbb Z_2 \times O_1  )^{oz} :=& \Big\{\big(1,( ),0\big), \big(-1,(17)(28)(35)(46),0\big), \big(-1,( ),1/2\big), \\& 
 \big(1,(17)(28)(35)(46),1/2\big)\Big\},
 \end{align*}
 and two their subgroups (both isomorphic to $O_1$):
 \begin{equation}
 (1_{\mathbb Z_2} \times O_1  )^{o}:=  \Big\{\big(1,( ),0\big),  \big(1,(17)(28)(35)(46),0\big)\Big\},
 \end{equation}
 
  \begin{equation}
 (1_{\mathbb Z_2} \times O_1  )^{oz}:=  \Big\{\big(1,( ),0\big),  \big(1,(17)(28)(35)(46),1/2\big)\Big\}
 \end{equation}
 (here $1_{\mathbb Z_2}$ stands for the neutral element in $\mathbb Z_2$).
 Then, for any $H < S_4 \times S^1 \simeq \{1\} \times S_4 \times S^1 < G$, define the subgroups  ${}^+H,\;  {}^-H,\; {}^+\overline{H}, \; {}^-\overline{H}  < G$ by
 \[
 {}^+H := H \cdot (\mathbb Z_2\times O_1)^{o}, \quad {}^-H := H \cdot (\mathbb Z_2\times O_1)^{oz},\; 
 \]
 \[
 {}^+\overline{H}:= H \cdot (1_{\mathbb Z_2}\times O_1)^{o}, \quad
  {}^-\overline{H} :=  H \cdot (1_{\mathbb Z_2}\times O_1)^{oz}.
  \]
Clearly, for any $H < S_4$, one has $H \cap (\mathbb Z_2\times O_1)^{o} = H \cap (\mathbb Z_2\times O_1)^{oz} = 1_G$, where $1_G$ stands for the neutral element in $G$, therefore ${}^+H$ and ${}^-H$ are isomorphic to the direct products of their factors. 
 All twisted subgroups of $G$ which we deal with in Example 2 appear as 
either $^+H$, or $^-H$, or $ {}^+\overline{H}$, or  ${}^-\overline{H}$, where $H$ is among the following groups: 
  \begin{align*}
 S_4:=&\Big\{\big((),0\big),\big((15)(28)(37)(46),0\big),\big((17)(26)(35)(48),0\big),\big((12)(35)(46)(78),0\big),\\&
 \big((17)(28)(34)(56),0\big),\big((14)(28)(35)(67), 0\big),\big((17)(23)(46)(58),0\big),\\&
\big((13)(24)(57)(68),0\big),\big((18)(27)(36)(45),0\big),\big((16)(25)(38)(47),0\big),\big((254)(368),0\big),\\&
 ((245)(386),0),((163)(457),0),((136)(475),0),((168)(274),0),\\&((186)(247),0),
 \big((138)(275),0\big),\big((183)(257),0\big),
 \big((1234)(5678),0\big),\big((1432)(5876),0\big),\\& (\big(1265)(3874),0\big),\big((1562)(3478),0\big),
 \big((1485)(2376),0\big),\big((1584)(2678),0\big)\Big\}\\
 D_4^z:=&\Big\{\big((),0\big),\big((1234)(5678),0\big),\big((13)(24)(57)(68),0\big),\big((1432)(5876),0\big),
 \\&\big((17)(26)(35)(48),1/2\big),\big((18)(27)(36)(45),1/2\big),\big((15)(28)(37)(46),1/2\big),\\&
 \big((16)(25)(38)(47),1/2\big)\Big\}\\
 D_3^z:=&\Big\{\big((),0\big),\big((254)(368),0\big),\big((245)(386),0\big),\big((17)(26)(35)(48),1/2\big),
 \\&\big((17)(28)(34)(56),1/2\big),\big((17)(23)(46)(58),1/2\big)\Big\}\\
 D_2^d:=&\Big\{\big((),0\big),\big((17)(26)(35)(48),1\big),\big((13)(24)(57)(68),1/2\big),\big((15)(28)(37)(46),
 1/2\big)\Big\}\\
 \mathbb Z^c_4:=&\Big\{\big((),0\big),\big((1234)(5678),1/4\big),\big((13)(24)(57)(68),1/2\big),\big((1432)(5876),3/4\big)\Big\}
 \\
 \mathbb{Z}^t_3:=&\Big\{\big((),0\big),\big((254)(368),1/3\big),\big((245)(386),2/3\big)\Big\}\\
 D_4^d:=&\Big\{\big((),0),((1234)(5678),1/2\big),\big((13)(24)(57)(68),0\big),\big((1432)(5876),1/2\big),
 \\&\big((17)(26)(35)(48),0\big),\big((18)(27)(36)(45),1/2\big),\big((15)(28)(37)(46),0\big), \\& \big((16)(25)(38)(47),1/2\big)\}\\
 D_3:=&\{\big((),0\big),\big((254)(368),0\big),\big((245)(386),(17)(26)(35)(48),0\big),\\&\big((17)(28)(34)(56),0\big),\big((17)(23)(46)(58),0\big)\Big\}\\
 \end{align*}
 \begin{align*}
 S_4^-:=&\Big\{\big((),0\big),\big((15)(28)(37)(46),1/2\big),\big((17)(26)(35)(48),1/2\big),\big((12)(35)(46)(78)
 ,1/2\big),\\&\big((17)(28)(34)(56),1/2\big),\big((14)(28)(35)(67)1/2\big),\big((17)(23)(46)(58),1/2\big), 
 \\&((13)(24)(57)(68),0),((18)(27)(36)(45),0),((16)(25)(38)(47),0),((254)(368),0)
 ,\\&\big((245)(386),0\big),\big((163)(457),0\big),\big((136)(475),0\big),\big((168)(274),0\big),\big((186)(247),0\big),\\&
 ((138)(275),0), ((183)(257),0),
 \big((1234)(5678),1/2\big),\big((1432)(5876),1/2\big),\\&\big((1265)(3874),1/2\big),\big((1562)(3478),1/2\big),
 \big((1485)(2376),1/2\big),\big((1584)(2678),1/2\big)\Big\}
 \end{align*}

\section*{Acknowledgments}
This work has been done as a part of the Prospective Human
Resources Support Program of the Czech Academy of Sciences; EH acknowledges the support by this program.

\bibliographystyle{abbrv}
\bibliography{the_bib}

\end{document}